\theoremstyle{plain}
\newtheorem{proposition}{Proposition} 
\newtheorem{defn}{Definition}
\newtheorem{thm}{Theorem}
\newtheorem{lemma}{Lemma}
\theoremstyle{remark}
\newtheorem{remark}{Remark}
\newcommand{\bet}{\zeta}
\newcommand{\yy}{\widehat{y^\circ}_T}
\newcommand{\y}{\widehat{y^\ast}_T}
\newcommand{\lebesgue}{\lambda}
\def\C{\mathbf{C}}
\def\E{\mathbb{E}}
\def\F{\mathscr{F}}
\def\N{\mathbb{N}}
\def\R{\mathbb{R}}
\def\rd{\mathbbm{R}}
\def\Ma{\mathbbm{M}}
\def\ML{\mathcal{L}}
\def\N{\mathbb{N}}
\def\X{X}
\newcommand{\yn}{0}
\newcommand{\com}{D}
\renewcommand{\d}{\mathrm{d}}
\newcommand{\e}{\mathrm{e}}
\newcommand{\LL}{\mathcal{L}}
\newcommand{\ep}{\varepsilon}
\renewcommand{\hat}{\widehat}
\renewcommand{\Upsilon}{Z}
\renewcommand{\tilde}{\widetilde}%
\definecolor{cs}{rgb}{0,0,0}
\definecolor{darkspringgreen}{rgb}{0.09, 0.45, 0.27}
\begin{document}
	\title{Nonparametric learning for impulse control problems}
	\subtitle{Exploration vs. Exploitation}
	\author{S\"oren Christensen\thanks{Christian-Albrechts-Universität Kiel, Mathematisches Seminar, Heinrich-Hecht-Platz 6, 24118 Kiel, Germany} \and Claudia Strauch\thanks{Aarhus University, Department of Mathematics, Ny Munkegade 118, 8000 Aarhus C, Denmark} }
	
	\maketitle

%
%
%
%
%
%


\begin{abstract}
One of the fundamental assumptions in stochastic control of continuous time processes is that the dynamics of the underlying (diffusion) process is known. 
This is, however, usually obviously not fulfilled in practice. 
On the other hand, over the last decades, a rich theory for nonparametric estimation of the drift (and volatility) for continuous time processes has been developed.
The aim of this paper is bringing together techniques from stochastic control with methods from statistics for stochastic processes to find a way to both learn the dynamics of the underlying process and control in a reasonable way at the same time. 
More precisely, we study a long-term average impulse control problem, a stochastic version of the classical Faustmann timber harvesting problem. 
One of the problems that immediately arises is an exploration-exploitation dilemma as is well known for problems in machine learning. 
We propose a way to deal with this issue by combining exploration and exploitation periods in a suitable way. 
Our main finding is that this construction can be based on the rates of convergence of estimators for the invariant density.
Using this, we obtain that the average cumulated regret is of uniform order $O({T^{-1/3}})$.
\end{abstract}

\textbf{Keywords:} nonparametric statistics; reinforcement learning; diffusion processes; optimal harvesting problem; Faustmann problem; impulse control; exploration vs. exploitation

\vspace{.2cm}
\textbf{Mathematics Subject Classification:} {93E20, 62M05, 60G40, 60J60}{}

\section{Introduction}
\subsection{Related results and literature}
In the usual stochastic control setup, it is assumed that the dynamics of the underlying stochastic process is known to the decision maker. 
In practice, this assumption does not seem to be realistic in many situations of interest. 
Different approaches have been proposed to deal with this issue. 
One way is to take the point of view of an ambiguity averse decision maker who is uncertain about the drift of the underlying process. 
She then searches for a maximin rule to find the best decision given nature chooses the worst possible drift. 
This approach is referred to as Knightian uncertainty, see \cite{R}, \cite{christensen2013optimal} and \cite{bayraktar2015minimizing}. 
({For these and the following references, we have, from the large body of literature, tried to choose references which are close to the setting studied in this paper.}) 
This is a conservative point of view for the decision maker. 
In particular, she does not take into account the information obtained about the drift by observing the process over time. 

Another perspective is taken by using hidden Markov models for modeling partially observed systems. 
For examples, it can be assumed that the coefficients of the underlying system are driven by an unobservable stochastic process which is then usually treated by filtering techniques. 
The problem formulation can then be done from a Bayesian or frequentist perspective. 
See, e.g., \cite{rieder2005portfolio} for an example from portfolio optimization. 
Another strand of literature with this point of view is classical sequential statistics, in particular optimal estimation of the drift and related quantities, see, e.g., \cite{ps} for different examples, or \cite{jobjornsson2018anscombe} for an overview on applications in clinical trial design. 
Here, for tractable examples a low-dimensional parametric situation is assumed. 
The same holds true for a strand of literature referred to as adaptive control theory, as considered 
in \cite{stettner1986continuous,bielecki2019adaptive}. 

From a different perspective, the problem is treated in the area of machine learning. 
In the framework of a Markov decision process, it is assumed that the decision maker has no knowledge about both the dynamics of the underlying system and the reward structure. 
Extensive textbook treatments with many references are \cite{MR3642732,bertsekas_reinf}. 
A main challenge is the exploration vs.~exploitation trade-off: on the one hand the decision maker needs to learn the environment, but at the same time she wants to use her current knowledge to optimize her reward. 
In many situations, there is no hope to find an optimal (in any reasonable sense) strategy. 
A deeper mathematical treatment, in particular for large (or even infinite) underlying models, is usually not obtainable. 
The exploration vs.~exploitation trade-off is also well-known from the famous multi-armed bandit problem.
We refer to \cite{bubeck2012regret} and \cite{Lattimore_bandits}.

In principle, the question of exploration (i.e., learning the dynamics of the underlying process) is a statistical problem. 
{In cases where application-driven knowledge of the form of the dynamics is available, one might try to exploit this by postulating a parametric diffusion model, indexed by some finite-dimensional parameter $\theta$. 
In most situations however, it is not plausible to assume a low-dimensional parametric structure, and one has to resort to nonparametric methods from the field of statistics for stochastic processes.
The statistical nature of the problem of estimating the diffusion dynamics depends heavily on the dimension of the diffusion process and the assumption on the observation scheme.
In particular, the analysis and the statistical results change when passing from the scalar case to higher dimension and from the framework of continuous or high-frequency observations to low-frequency sampling.
A summary on the evolution of the area of statistical estimation for diffusions up to the mid 2000's is given in \cite{goetal04}. 
The monograph \cite{kut04} provides a comprehensive overview on inference for one-dimensional ergodic diffusion processes on the basis of continuous observations considering pointwise
and $L^2$-risk measures.}

In the light of the previous discussion, it is a bit surprising that the application of results from nonparametric statistics to problems in stochastic control is rare. 
One exception is \cite{kohler2013data} where a purely data driven method for tackling discrete optimal stopping problems is proposed. 
Here, the exploration and exploitation periods have been separated by assuming that first the underlying process has been observed for $T$ time units before the stopping problem is considered in a second step on the basis of the observations. 
The authors then prove convergence to the optimal value as $T\to\infty$ when using their rule for general underlying stationary and ergodic processes in discrete time. 
No results on the speed of convergence are obtained. 

\subsection{Problem studied in this paper and heuristic discussion}
In this paper, we consider a continuous time impulse control problem for an underlying one-dimensional diffusion process. 
We relax the assumption of known drift in the most fundamental way by assuming no prior knowledge about the drift function for the underlying diffusion when the optimization problem starts. 
We, however, assume that we know the reward structure of the problem. 
Our aim is then to find a purely data driven procedure giving, on the long run, the optimal reward per time unit. 
Even more, we aim in obtaining a (preferably high) rate of convergence by taking into account the known rates of convergence in the estimation of the {unknown diffusion dynamics. 
Given the continuous nature of the control problem, it suggests itself to work in the statistical framework of continuous observations of the diffusion process.
In this case, the diffusion coefficient $\sigma$ is perfectly identified from the data by means of the quadratic variation of $X$.}

We consider one of the best known stochastic impulse control problems in natural resource economics: the stochastic Faustmann harvesting problem. 
We refer to \cite{A04} for an overview with many references. 
For our setting, the problem may briefly be described as follows (an exact formulation including all assumptions is given in the following sections): 
we denote by $X$ the random volume of a natural resource, say, standardized to be a process on the full real line. For historical reasons, $X$ is often interpreted as volume of timber in a forest stand. Although this interpretation is not totally convincing in this stochastic setting, we will also use these notions in the following. 

In case no interventions take place, the process evolves as an It\^o diffusion process with dynamics 
\begin{equation*}
\d X_t=b(X_t)\d t+\sigma(X_t)\d W_t.
\end{equation*}
The decision maker can now choose intervention times $0 \leq \tau_1<\tau_2<\ldots$ where the forest is harvested to an externally given base level $y_0\in \R$, i.e., $X_{\tau_n}=y_0$ for all\ $n$. 
Selling the wood yields the gross reward $g(X_{\tau_n-})-g(y_0+)$. 
The net reward is given by $$\left(g(X_{\tau_n-})-g(y_0+)\right)+g(y_0+)=g(X_{\tau_n-}),$$ where $g(y_0+)\leq 0$ can be interpreted as a fixed cost component -- the costs of one intervention.
W.l.o.g. we assume $y_0=\yn$ in the following. 
The decision maker aims in choosing the intervention times to maximize the asymptotic growth rate 
\begin{equation*}
\liminf_{T\to\infty}\frac{1}{T}\ \E\left[\sum_{n:\tau_n\leq T}g(X_{\tau_n-})\right].
\end{equation*}
This long-term average criterion reflects a sustainable approach to the harvesting problem. 
In the existing literature, variants of this problem (mostly with a discounted reward criterion) are treated under the assumption that the dynamics of the underlying process $X$ is fully known to the decision maker. 
In this case, a stationary optimal strategy may be found in the class of threshold strategies of the form
\[\tau_n=\inf\left\{t\geq \tau_{n-1}:X_t\geq y^*\right\},\] 
see Figure \ref{fig:plotssstrategy}. 
As we detail in Section \ref{sec:known_b} below, for known dynamics $b(\cdot)$, $\sigma(\cdot)$, the threshold $y^*>\yn$ can be characterized as a maximum of an explicitly given real function.

\begin{figure}
	\centering
	\includegraphics[width=0.9\linewidth]{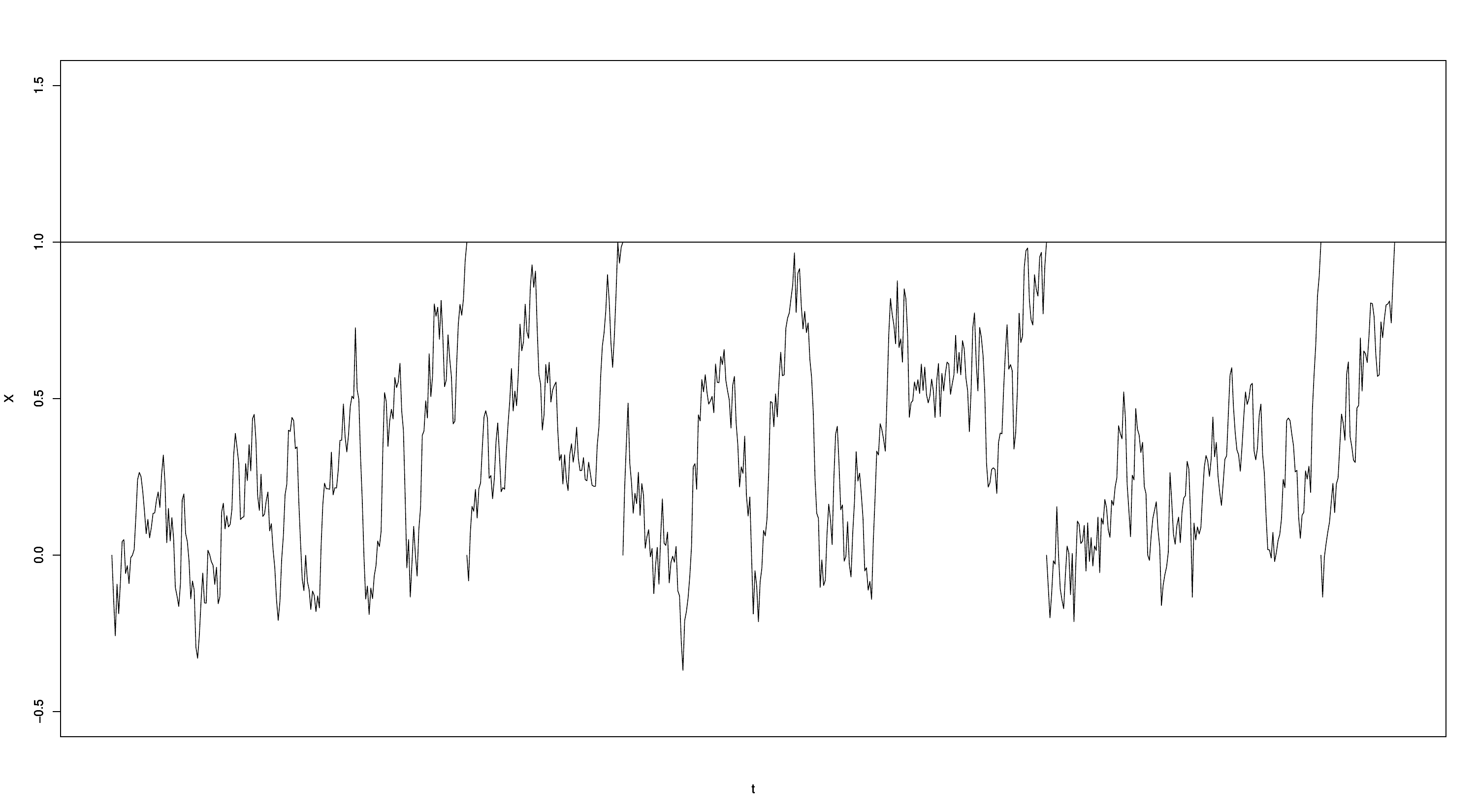}
	\caption{A path controlled using a threshold strategy}
	\label{fig:plotssstrategy}
\end{figure}

As discussed above, however, the assumption that the drift function $b(\cdot)$ is known, are not realistic in practice. 
Therefore, the threshold $y^*=y^*_b$ is not known as well and has to be estimated. 
A naive idea could be as follows: first, we let the forest grow without intervention and use methods from statistics for diffusion processes {to estimate the unknown dynamics of the diffusion process} (or, more precisely, the invariant density). 
Using this estimator, we then estimate the optimal threshold $\hat y$ and use the corresponding threshold strategy. 
Then, we update the estimator when observing the process, see Figure \ref{fig:plot_naive}.
However, using this strategy we are faced with an exploration vs.~exploitation problem: 
we cannot learn the {invariant density associated with the unknown drift coefficient $b(\cdot)$} above the level $\hat y$ as we always cut the forest when the value $\hat y$ is reached. 
So, it might be that the forest stand grows very rapidly above $\hat y$, so that we should wait longer, but we never learn this as the process never reaches this region again. 
\begin{figure}
	\centering
	\includegraphics[width=0.9\linewidth]{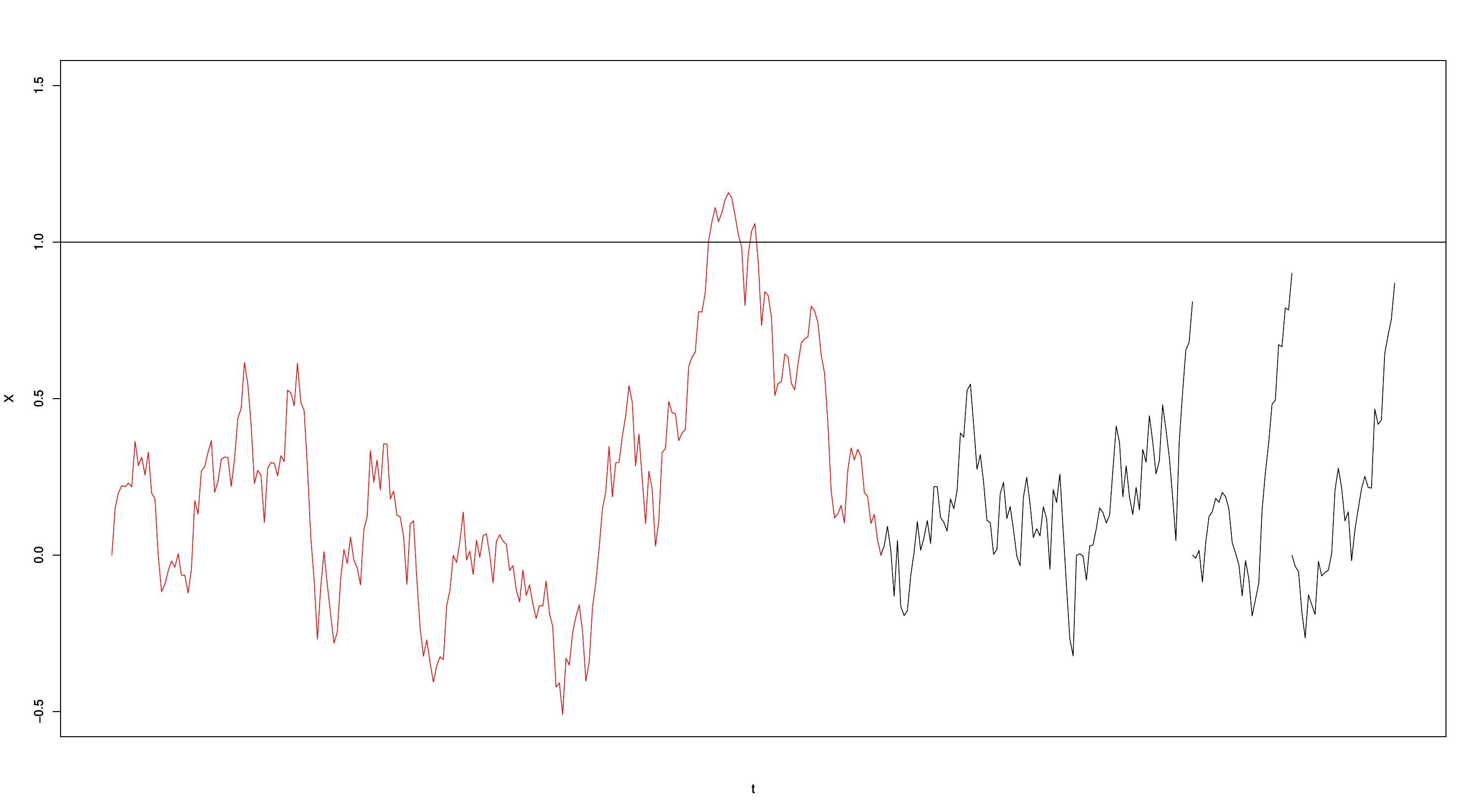}
	\caption{A path controlled using a naive impulse strategy with an exploration period at the beginning (red) and then a greedy strategy (black)}
	\label{fig:plot_naive}
\end{figure}

The way out we take here is to modify the strategy such that we combine exploitation with exploration periods: from time to time, we ignore the estimated optimal level $\hat y$ and let the forest grow further to obtain information about the drift function. 
To still have a chance to obtain the optimal growth rate from the full information problem in the long run, it is clear that the time $S_T$ spent in the exploration periods until $T$ should fulfill
\[S_T\to\infty,\;\;\frac{S_T}{T}\to0\;\;\;\mbox{ as }T\to\infty. \]
However, a suitable rate for this is not clear at first. 
{Our investigation reveals that the fundamental statistical issue for proposing a data-driven strategy for the given impulse control problem consists in bounding the $L^1$ risk of an estimator of the invariant density of the underlying diffusion process.
The convergence rate of this estimator can then be used to derive an appropriate rate for balancing exploitation and exploration periods.}
More precisely, we obtain that a rate of $S_T\approx T^{2/3}$ is a suitable choice. 

Recently, certain singular control problems are studied in \cite{christensen2021learning} using an approach inspired by the one presented here.

\subsection{Structure of this paper}
More detailed, we start by precisely stating the problem in Section \ref{sec:problem}. 
Then, we analyze the problem in two steps: in Section \ref{sec:known_b}, we first solve the ergodic stochastic control problem for known characteristics of the underlying process. Motivated by this, we study the statistical problem of estimating the main ingredients for the optimal strategy in Section \ref{sec:rhobest}. 
The analysis is based on the analysis of the $L^1$ risk of invariant density estimators. 
In\ Section \ref{sec:data_driven}, we then put pieces together as indicated above.

\section{Problem formulation and standing assumptions}\label{sec:problem}
We consider an It\^o diffusion process $X$ on the real line with dynamics
\begin{equation}\label{eq:dynamcs}
\d X_t=b(X_t)\d t+\sigma(X_t)\d W_t.
\end{equation}
{For ease of presentation, we develop our results in the following classical scalar diffusion model. 
As discussed above, we may assume $\sigma$ to be known. 
Later, we will consider the diffusion for varying drift functions $b$. 
If necessary, we denote this by writing $\E_b$ for the expectation with underlying diffusion with drift function $b$. 
	
\begin{defn}\label{def:B}
{Let $\sigma\in \operatorname{Lip}_{\operatorname{loc}}(\R)$ and assume that, for some constants $\overline{\nu},\underline\nu\in(0,\infty),$  $\sigma$ satisfies $\underline\nu\leq \left|\sigma(x)\right|\leq \overline\nu$ for all $x\in\R$. 
For fixed constants $A,\gamma>0$ and $\C\geq 1$, define the set $\Sigma=\Sigma(\C,A,\gamma,\sigma)$ as
\[\Sigma\coloneqq\Big\{b \in \operatorname{Lip}_{\operatorname{loc}}(\R)\colon|b(x)| \leq\C(1+|x|),\ \forall|x|>A\colon \frac{b(x)}{\sigma^2(x)}\operatorname{sgn}(x)\leq -\gamma\Big\}.
\]}
\end{defn}
	
Given $\sigma$ as above and any $b\in\Sigma$, there exists a unique strong solution of the SDE \eqref{eq:dynamcs} with ergodic properties and invariant density
\[	\rho(x)=\rho_b(x)\coloneqq \frac{1}{C_{b,\sigma}\sigma^2(x)}\exp\left(\int_0^x\frac{2b(y)}{\sigma^2(y)}\d y\right),\quad x\in\R,
\]
with
	\[C_{b,\sigma}\coloneqq\int_\R\frac{1}{\sigma^2(u)}\ \exp\left(\int_0^u\frac{2b(y)}{\sigma^2(y)}\d y\right)\d u\] 
denoting the normalizing constant, see \cite{borodin-salminen}, as is needed for obtaining the relevant statistical results.
Note that the assumptions on the drift coefficient already impose some regularity on the invariant density $\rho_b$. 
More precisely, for any $b\in\Sigma$, $\sigma^2\rho_b$ is continuously differentiable and there exists a constant $\ML >0$ (depending only on $\C,A,\gamma$) such that 
\begin{equation}\label{eq:reg_rho_b}
	\sup_{b\in{\color{cs}\Sigma(\C,A,\gamma,\sigma)}}\max\left\{\|\rho_b\|_{\infty},\ \|(\sigma^2\rho_b)'\|_\infty\right\}< \ML 
\end{equation}
and, for any $\theta>0$, we have $\sup_{b\in{\color{cs}\Sigma(\C,A,\gamma,\sigma)}}\sup_{x\in\R}\left\{|x|^{\theta}\rho_b(x)\right\}<\infty$.
}

To set up the control problem, let furthermore $g\colon(\yn,\infty)\to\R $ be a measurable function with $g(\yn+)<0$, which we assume to be continuous in order to avoid some technicalities. 
To avoid trivialities, we furthermore assume that 
\[y_1\coloneqq\inf\{y>\yn:g(y)>0\}<\infty.\]
For our later considerations, we need to make sure that there exists a constant level $\bet>y_1$ such that it is never optimal to wait for the forest stand to become larger than $\bet$. 
Here, $\bet$ should be known to the decision maker no matter what the underlying drift is. 
There are many ways to realize this. 
One way is to assume that for all\ $y\geq \bet$ and all $b\in\Sigma$, it holds that
\[
\frac{g(y)}{\xi_b(y)}\le \sup_{z\in(\yn,\bet]}\frac{g(z)}{\xi_b(z)}, 
\]
where $\xi_b$ is the function given in \eqref{eq:xi} below, which we assume in the following. 
A trivial sufficient condition is 
\begin{equation*}
g(\bet)>0\quad \mbox{ and $\quad g(y)\leq g(\bet)$ for all $y>\bet$}
\end{equation*}
if -- for example -- the timber buyer has no interest in buying more than $\bet$ units at once. 
For an increasing sequence of stopping times $K=(\tau_n)_{n\in\N}$ with $\lim_{n\to\infty}\tau_n=\infty$, we consider the controlled process $X^K$ with dynamics \eqref{eq:dynamcs} for $\tau_n\leq t<\tau_{n+1}$ and 
\[X_{\tau_n}^K=X_{\tau_n-}^K+(\yn-X_{\tau_n-}^K)=\yn.\]
In other words, $X^K$ behaves like a usual diffusion process between the stopping times and is in the stopping times restarted in $\yn$.
We call such a strategy $K=(\tau_n)_{n\in\N}$ \emph{admissible impulse control strategy} if $X_{\tau_n-}^K\geq \yn$ for all\ $n\in\N$. 
We denote the set of all such strategies by $\mathcal K$. 
The problem we study is to maximize over all $K\in\mathcal K$ the long-term average value
\begin{equation}\label{eq:problem}
\liminf_{T\to\infty}\frac{1}{T} \E\left[\sum_{n:\tau_n\leq T}g(X_{\tau_n-}^K)\right].
\end{equation}
We denote the optimal value, i.e., the supremum over all the previous values, by 
\[\Phi=\Phi(b).\]
Note that also impulses of size 0 are admissible when $X_{\tau_n-}^K=\yn$. 
For the ease of exposition, we use the convention that this corresponds to no intervention, so that such an impulse gives net reward $g(\yn):=0$ (making $g$ discontinuous in $\yn$ as $g(\yn+)<0$).



\section{Solution for known $b$}\label{sec:known_b}
In case the drift $b\in \Sigma$ is known and may be used in the construction of optimal strategies, the problem is an ordinary impulse control problem and can be solved using standard approaches. 
However, note that we are interested in the case of quite general drift $b\in\Sigma$, which is -- in the ergodic case -- not treated that often, see the literature review at the end of this section. 
Let us note that generalization of the results given below were recently obtained in \cite{christensen2021competition} for a setting with additional running costs using more advanced ergodic techniques. 
For the sake of completeness, we give an elementary proof here.

For this section, we assume the coefficients of $X$ to be known, and therefore so are the speed measure $M$ and scale function $S$, which are given by their densities 
(see e.g. \cite{borodin-salminen}) 
\begin{align*}
	m(x)=\frac{1}{\sigma^2(x)}\exp\left({\int^{x}_0\frac{2b(y)}{\sigma^2(y)}dy} \right),\;\;s(x)=\exp \left({-\int^{x}_0\frac{2b(y)}{\sigma^2(y)}dy}\right).
\end{align*}
We write
\begin{equation}\label{eq:xi}
\xi(x)\coloneqq 2\int_{\yn}^xM((-\infty,y])\d S(y)\ =\ 2\int_{\yn}^x\frac{1}{\sigma^2(y)\rho(y)}\int_{-\infty}^y\rho(z)\d z \d y.
\end{equation}
The second equality immediately holds by standard results for diffusion processes by noting that the invariant density $\rho$ is -- up to normalization -- the density of\ $M$, see \cite{borodin-salminen}.
We remark that $\xi$ depends on the unknown drift $b$ via the invariant density, so that we also write $\xi=\xi_b$ if necessary. 
The following result gives a probabilistic interpretation of $\xi$:


\begin{lemma}\label{lem:martingale_threshold}
	Let\ $y>\yn$. For all\ $b\in\Sigma$ and 
	all stopping times $\tau$ with
	$\tau\leq \tau_y\coloneqq\inf\{t:X_t\geq y \}$, 
	for the uncontrolled process $X$ it holds that
	\[\E\left[\xi(X_\tau)\mid X_0=\yn\right]=\E\left[\tau\mid X_0=\yn\right].\]
	In particular, for
	all $y>\yn$ it holds 
	\[\xi(y)=\E\left[\tau_y\mid X_0=\yn\right]. \]
\end{lemma}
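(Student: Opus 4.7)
The plan is to show that $M_t := \xi(X_t)-t$ is a continuous local martingale under $\P$ (with $X_0 = \yn$) and then to apply optional stopping at $\tau$.

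The first step is the generator identity $L\xi\equiv 1$, where $L=\tfrac12\sigma^2\partial_{xx}+b\,\partial_x$ is the infinitesimal generator of $X$. From the second representation in \eqref{eq:xi} one reads off
\[
\xi'(x)\ =\ \frac{2}{\sigma^2(x)\rho(x)}\int_{-\infty}^x\rho(z)\,\d z,
\]
and from the explicit form of $\rho_b$ in Definition \ref{def:B} one has $(\sigma^2\rho)'(x)=2b(x)\rho(x)$. Differentiating $\xi'$ once more and substituting into $\tfrac12\sigma^2\xi''+b\xi'$ produces a cancellation that leaves $L\xi(x)=1$. Since $\xi\in C^2(\R)$, It\^o's formula then gives
\[
\xi(X_t)-t\ =\ \xi(\yn)+\int_0^t\sigma(X_s)\xi'(X_s)\,\d W_s,
\]
so $M_t-\xi(\yn)$ is a continuous local martingale under $\P$.

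Next, I would localize. Set $\sigma_n:=\inf\{t\geq 0:X_t\leq -n\}$ and fix $K\in(0,\infty)$. On the interval $[0,\tau\wedge\sigma_n\wedge K]$ the process $X$ stays in $[-n,y]$, so the integrand $\sigma\xi'$ is bounded there. Hence the stopped stochastic integral is a square-integrable martingale with zero mean, and optional stopping yields
\[
\E\!\left[\xi(X_{\tau\wedge\sigma_n\wedge K})\right]\ =\ \E\!\left[\tau\wedge\sigma_n\wedge K\right].
\]
I would then pass to the limit, letting $K\to\infty$ first and then $n\to\infty$. Since $\xi$ is increasing with $\xi(\yn)=0$ and $\tau\leq\tau_y$, one has the uniform upper bound $\xi(X_{\tau\wedge\sigma_n\wedge K})\leq\xi(y)$; combined with the monotone convergence theorem applied to the right-hand side, this already shows $\E[\tau]\leq\xi(y)<\infty$, so $\tau$ is integrable. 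For the left-hand side, I would use that the mean-reverting condition $\frac{b}{\sigma^2}\,\mathrm{sgn}(x)\leq-\gamma$ on $\{|x|>A\}$ forces exponential decay of $\rho_b$ at $-\infty$, making the integrand defining $\xi$ globally bounded and hence $\xi$ at most linearly growing. Together with the integrability of $\sup_{t\leq\tau_y}|X_t|$ under $\P$ (a standard consequence of the same Foster--Lyapunov type drift condition), dominated convergence yields $\E[\xi(X_\tau)]=\E[\tau]$. The second assertion is then the special case $\tau=\tau_y$, where $X_{\tau_y}=y$ by path continuity.

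The step I expect to be the main obstacle is making this last passage to the limit genuinely rigorous: one needs both the linear growth of $\xi$ on $(-\infty,0]$ and sufficient integrability of the running minimum $\inf_{t\leq\tau_y}X_t$, and both must be read off carefully from the drift bounds encoded in $\Sigma$ rather than simply assumed.
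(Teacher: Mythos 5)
Your argument is correct and is, in substance, exactly the argument that the paper does not spell out but delegates to a citation (Proposition 2.6 of \cite{helmes2017continuous}): verify $\tfrac12\sigma^2\xi''+b\xi'=1$ via $(\sigma^2\rho_b)'=2b\rho_b$, conclude that $\xi(X_t)-t$ is a continuous local martingale, localize on $[-n,y]\times[0,K]$, and pass to the limit. Two remarks on the points you flag. First, the phrase ``globally bounded'' for the integrand $\xi'=2F_b/(\sigma^2\rho_b)$ is not accurate: on $(A,\infty)$ the condition $b/\sigma^2\le-\gamma$ makes $\sigma^2\rho_b$ decay exponentially while $F_b\to1$, so $\xi'$ blows up at $+\infty$ (indeed $\xi(y)=\E[\tau_y\mid X_0=\yn]$ grows exponentially). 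What you actually need, and what is true, is boundedness of $\xi'$ on $(-\infty,y]$: for $z\le x\le -A$ one has $\sigma^2(z)\rho_b(z)\le\sigma^2(x)\rho_b(x)\,\e^{-2\gamma(x-z)}$, whence $\xi'(x)\le(\gamma\underline\nu^2)^{-1}$ there, and continuity handles $[-A,y]$; this gives the linear bound $|\xi|\le c(1+|x|)$ on $(-\infty,y]$, which is all your domination step uses. Second, the integrability of the running minimum $m:=\inf_{t\le\tau_y}X_t$ does follow from the drift condition as you anticipate: by the exit-probability formula, $\P\left(m\le x\right)=\left(S(y)-S(\yn)\right)/\left(S(y)-S(x)\right)$ with scale function $S$, and $S'(u)=\exp\bigl(-\int_0^u\tfrac{2b}{\sigma^2}\bigr)\ge c\,\e^{2\gamma(|u|-A)}$ for $u<-A$ yields $\P\left(m\le x\right)\le C\e^{-2\gamma|x|}$, hence $\E|m|<\infty$ and your dominated convergence is justified. (Note that Fatou alone, applied to the nonnegative variables $\xi(y)-\xi(X_{\tau\wedge\sigma_n\wedge K})$, only gives the inequality $\E[\xi(X_\tau)]\ge\E[\tau]$; the uniform integrability of the negative part really is needed, so the scale-function estimate is not dispensable.) With these two points made precise, the proof is complete.
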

\begin{proof} The claim is well-known, see, e.g., \cite[Proposition 2.6]{helmes2017continuous}.
\end{proof}

The heuristic solution to the problem is now not difficult: we let the forest stand grow until it reaches a level $y$ and then cut it down, giving the (net) reward $g(y)$. 
The reward per expected time unit is therefore 
\[\frac{g(y)}{\xi(y)}=\frac{g(y)}{\xi_b(y)},\]
so that a natural candidate for an optimal level is each $y$ maximizing this value. 
The main challenge to verify this for these kind of problems is the transversality condition. 
For this we use the following lemma which is based on a coupling argument. 

\begin{lemma}\label{lem:reflected}
	 Let $X^{\yn}$ denote the stochastic process with
	\begin{itemize}
		\item diffusion dynamics \eqref{eq:dynamcs} on $(-\infty,\yn)$,
		\item when $X^{\yn}$ is in a state $\geq0$, it is restarted in state $-1$ by an immediate impulse.
	\end{itemize}
	Then, for each admissible impulse control strategy $K$ and each $T>0$, it holds that
	\[X^{\yn}_T\leq_{st} X^K_T,\]
	where $\leq_{st}$ denotes stochastic ordering. 
\end{lemma}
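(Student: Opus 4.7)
The approach I would take is to establish the stronger pathwise inequality $X^{\yn}_t \leq X^K_t$ for all $t \geq 0$, from which the stochastic ordering at time $T$ follows immediately. The key idea is a coupling: realize both processes on a common probability space driven by the same Brownian motion $W$, so that the difference between the two reduces to the interplay of Skorokhod reflection with pathwise SDE comparison.

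First I would set up this coupling. Between the intervention times $0 = \tau_0 < \tau_1 < \tau_2 < \cdots$ of $K$, run $X^K$ via \eqref{eq:dynamcs} driven by $W$ and reset it to $\yn$ at each $\tau_n$; simultaneously run $X^{\yn}$ as the solution of the same SDE driven by the same $W$, reflected downward at $\yn$ via the classical Skorokhod map, with any initial excess above $\yn$ removed by the instantaneous impulse. Under the local Lipschitz and linear growth conditions of Definition \ref{def:B}, both constructions yield well-defined strong solutions, and we start from $X^{\yn}_0 \leq \yn = X^K_0$.

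The main argument is then an induction on $n$, showing $X^{\yn}_t \leq X^K_t$ on each slot $[\tau_n, \tau_{n+1})$. Assume the bound $X^{\yn}_{\tau_n} \leq \yn = X^K_{\tau_n}$ holds, and let $\hat Y$ denote the \emph{unreflected} solution of \eqref{eq:dynamcs} on $[\tau_n, \infty)$ driven by $W$ and starting from $\hat Y_{\tau_n} = X^{\yn}_{\tau_n}$. Since $X^K$ solves the same SDE on $[\tau_n, \tau_{n+1})$ starting from the larger value $\yn$, the pathwise comparison theorem for one-dimensional SDEs with Lipschitz coefficients gives $\hat Y_t \leq X^K_t$ on this interval. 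Skorokhod's reflection formula meanwhile represents $X^{\yn}_t = \hat Y_t - L_t$ with non-decreasing, non-negative $L_t = \max\bigl(0,\sup_{\tau_n \leq s \leq t}(\hat Y_s - \yn)\bigr)$, so $X^{\yn}_t \leq \hat Y_t \leq X^K_t$. At $\tau_{n+1}$, $X^K$ resets to $\yn$ while $X^{\yn}_{\tau_{n+1}} \leq \yn$ by construction of the reflected process, so the inequality survives the impulse and the induction step closes.

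The main delicate point is coordinating the Skorokhod reflection with the SDE comparison across the intervention times; introducing the auxiliary unreflected process $\hat Y$ is precisely what makes these two ingredients cleanly composable, since reflection enters only after comparison has already been applied. Once pathwise dominance is in hand, the stated stochastic ordering follows at once from the resulting coupling, which satisfies $\PP(X^{\yn}_T \leq X^K_T) = 1$.
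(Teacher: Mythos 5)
Your overall strategy --- a synchronous coupling (same driving Brownian motion) combined with pathwise comparison, run inductively across the intervention intervals of $K$ --- is a legitimate route, and a genuinely different one from the paper's. The paper uses a \emph{partial} coupling: the two processes run with identical paths until $X^{\yn}$ is reflected/restarted, then run \emph{independently} until the two continuous paths meet again, at which point they are re-glued. Since continuous paths cannot exchange order without meeting, pathwise dominance follows there with no comparison theorem at all. Your version instead needs comparison twice per interval (free vs.\ free, and reflected vs.\ free), and the second of these is where the one real problem in your write-up sits.

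The gap: the identity $X^{\yn}_t = \hat Y_t - L_t$ with $L_t = \max\bigl(0,\sup_{\tau_n\le s\le t}(\hat Y_s-\yn)\bigr)$ is the Skorokhod map applied to the path of the \emph{free} diffusion $\hat Y$, and that is not the reflected diffusion of the lemma once the coefficients are state-dependent. The reflected process solves $X^{\yn}_t = X^{\yn}_{\tau_n}+\int_{\tau_n}^t b(X^{\yn}_s)\,\d s+\int_{\tau_n}^t\sigma(X^{\yn}_s)\,\d W_s - L_t$, with $b$ and $\sigma$ evaluated along $X^{\yn}$ itself and $L$ the local time of $X^{\yn}$ at $\yn$; this does not coincide with the Skorokhod reflection of $\hat Y$, and its pushing term is not given by the running-maximum formula. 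The inequality you actually need, $X^{\yn}_t\le \hat Y_t$, is still true, but it requires a comparison argument between the reflected and the unreflected SDE rather than an exact representation: for instance, apply Tanaka's formula to $(\hat Y-X^{\yn})^-$, use that $L$ is nondecreasing and enters with the favourable sign, kill the local-time term via the Lipschitz continuity of $\sigma$, and conclude by Gronwall (or invoke a standard comparison theorem for reflected SDEs). With that substitution the rest of your induction --- the free-vs.-free comparison from ordered initial conditions via pathwise uniqueness, and the observation that the impulse at $\tau_{n+1}$ preserves the order because $X^{\yn}_{\tau_{n+1}}\le \yn = X^K_{\tau_{n+1}}$ --- goes through and yields the pathwise bound, hence the stochastic ordering.
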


%

With this, we may prove the following general result. 

\begin{proposition}\label{prop:standard_control}
Assume that $b\in\Sigma$. 
Then, the value $\Phi=\Phi(b)$ for the problem \eqref{eq:problem} is given by 
	\begin{equation}\label{sol:standard_control}
	\Phi(b)=\sup_{y\in[y_1,\bet]}\frac{g(y)}{\xi_b(y)}.
	\end{equation}
Furthermore, whenever $y^*=y^*_b$ denotes a maximizer of the function  $[y_1,\bet]\to\R,\, y\mapsto \frac{g(y)}{\xi_b(y)}$, the following sequence $\hat K=(\hat\tau_n)_{n\in\N}$ is optimal:
	\[\hat\tau_n=\inf\{t\geq \hat\tau_{n-1}:X_t\geq y^* \},\;\hat\tau_0\coloneqq0. \] 
\end{proposition}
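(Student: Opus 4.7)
The plan is to establish matching bounds on $\Phi(b)$, both equal to $\Phi^*:=\sup_{y\in[y_1,\bet]}g(y)/\xi_b(y)$, yielding simultaneously the formula \eqref{sol:standard_control} and the optimality of the threshold strategy.

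For the \textbf{lower bound} $\Phi(b)\ge\Phi^*$, I would show that the threshold strategy $\hat K=(\hat\tau_n)$ at level $y^*$ attains $\Phi^*$. Because the controlled process restarts at $\yn$ after each impulse, the strong Markov property makes the cycle lengths $\hat\tau_n-\hat\tau_{n-1}$ i.i.d., each distributed as $\tau_{y^*}$ started from $\yn$, with mean $\xi_b(y^*)<\infty$ by Lemma \ref{lem:martingale_threshold} (finiteness of this expectation is guaranteed by the mean-reverting drift condition built into Definition \ref{def:B}). Hence $(\hat\tau_n)$ is a classical renewal process, every reward equals $g(y^*)$, and the elementary renewal theorem $\E[N_T]/T\to 1/\xi_b(y^*)$ gives $T^{-1}\E\sum_{n:\hat\tau_n\le T}g(X_{\hat\tau_n-}^{\hat K})\to g(y^*)/\xi_b(y^*)=\Phi^*$.

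For the \textbf{upper bound} $\Phi(b)\le\Phi^*$, I would first establish the pointwise inequality $g(y)\le\Phi^*\xi_b(y)$ for every $y\ge\yn$, via a three-case split: on $(\yn,y_1)$ the left side is non-positive while $\Phi^*\xi_b\ge 0$; on $[y_1,\bet]$ this is the definition of $\Phi^*$; on $(\bet,\infty)$ it is precisely the standing assumption on $\bet$ (combined with the observation that the sup in that assumption equals $\Phi^*$). The key second ingredient is that $\xi_b$ solves $\mathcal{L}\xi_b=1$ with $\xi_b(\yn)=0$, which is exactly the PDE characterisation that underlies Lemma \ref{lem:martingale_threshold} and makes $\xi_b(X_t)-t$ a local martingale. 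Since $X^K$ restarts at $\yn$ at each $\tau_{n-1}$ and evolves freely on $[\tau_{n-1},\tau_n]$, optional stopping gives $\E[\xi_b(X_{\tau_n-}^K)\mid\FF_{\tau_{n-1}}]=\E[\tau_n-\tau_{n-1}\mid\FF_{\tau_{n-1}}]$. Summing cycles telescopes to $\E\sum_{n:\tau_n\le T}\xi_b(X_{\tau_n-}^K)=\E[\tau_{N_T}]\le T$, and combining with the pointwise inequality yields $\E\sum_{n:\tau_n\le T}g(X_{\tau_n-}^K)\le\Phi^*T$; dividing by $T$ and taking $\liminf$ concludes.

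The main technical obstacle is justifying optional stopping for an \emph{arbitrary} admissible $K$: a priori $X^K$ may reach arbitrarily high values between interventions, so the random variables $\xi_b(X_{\tau_n-}^K)$ and $\tau_n-\tau_{n-1}$ are not a priori integrable, and Lemma \ref{lem:martingale_threshold} is stated only under the restriction $\tau\le\tau_y$. I would handle this by localising at the hitting times $\sigma_R:=\inf\{t:|X_t|\ge R\}$, applying the lemma on each $(\tau_{n-1}\wedge\sigma_R,\tau_n\wedge\sigma_R]$, and passing to the limit $R\to\infty$ using the uniform controls on $\rho_b$ and its polynomial moments implicit in \eqref{eq:reg_rho_b}, together with the stochastic comparison with the reflected process from Lemma \ref{lem:reflected}. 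The mean-reverting drift condition in Definition \ref{def:B} ensures that the resulting tails decay fast enough for Fatou's lemma / monotone convergence to push the telescoping identity through to the limit.
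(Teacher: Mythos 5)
Your lower bound is a legitimate and genuinely different route from the paper's: you invoke the elementary renewal theorem for the i.i.d.\ cycles of the threshold strategy, whereas the paper re-runs its upper-bound chain of (in)equalities for $\hat K$ and argues that each inequality becomes an equality (via Lemma \ref{lem:martingale_threshold} and ergodicity of $X^{\hat K}$). Your pointwise inequality $g(y)\le \Phi^*\xi_b(y)$ with the three-case split is also exactly the mechanism the paper uses (with $\gamma=\Phi^*$), and it is good that you made explicit where the standing assumption on $\bet$ enters.

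The gap is in the telescoping step of your upper bound. First, $\tau_{N_T}$ (the last intervention before $T$) is not a stopping time, so you cannot apply optional sampling at it; and the per-cycle identity $\E\left[\xi_b(X^K_{\tau_n-})\mid\F_{\tau_{n-1}}\right]=\E\left[\tau_n-\tau_{n-1}\mid\F_{\tau_{n-1}}\right]$ does not survive multiplication by $\mathds{1}\{\tau_n\le T\}$, since that indicator is not $\F_{\tau_{n-1}}$-measurable (restricting to $\{\tau_n\le T\}$ biases the cycle towards short durations while leaving $\xi_b(X^K_{\tau_n-})$ unchanged). What optional sampling actually yields, after truncating each cycle at the deterministic time $T$ as the paper does, is
$\E\sum_{n:\tau_n\le T}\xi_b(X^K_{\tau_n-})=T+\E\,\xi_b(X_0^K)-\E\,\xi_b(X^K_T)$,
and the boundary term $-\E\,\xi_b(X^K_T)$ is \emph{not} sign-definite: $\xi_b<0$ on $(-\infty,\yn)$ and an arbitrary admissible $K$ may let the process spend time far below $\yn$. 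Showing that this term is $o(T)$ uniformly over admissible strategies is precisely the transversality condition that the paper identifies as the main difficulty and resolves with Lemma \ref{lem:reflected}: $X^{\yn}_T\leq_{st}X^K_T$, $\xi_b$ is nondecreasing, and the reflected process is ergodic, whence $T^{-1}\E\,\xi_b(X^{\yn}_T)\to0$. You cite Lemma \ref{lem:reflected} only as an integrability/localisation device; its essential use --- killing the boundary term --- is missing from your argument, and without it the inequality $\E\sum_{n:\tau_n\le T}\xi_b(X^K_{\tau_n-})\le T$ is unsubstantiated.
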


As mentioned above, the previous result is not surprising and related results do exist in the literature. For example, a Faustmann-model with discounting is presented in \cite{alvarez2004class}, where sufficient conditions for the existence of unique optimal thresholds are worked out in terms of the fundamental solutions of the underlying diffusions. In a general linear diffusion framework, ergodic impulse control problems are discussed in \cite{helmes2017continuous} and \cite{helmes2018weak} and sufficient conditions  for the optimality of threshold strategies are given, unifying many more specific results in the literature. In contrast to the model considered here, in their framework the decision maker can freely choose the target state at the control times as well. An additional slight technical difference is that minimization problems (instead of maximization problems as discussed here) are studied.

\begin{remark}
{\color{cs}As is to be expected} for ergodic problems, the value does not depend on the initial distribution of the diffusion. 
Furthermore, the renewal character of the situation is reflected in the stationarity of the optimal strategy given above. 
However, note that for the long term average criterion \eqref{eq:problem}, optimal strategies are highly non-unique: we may use quite arbitrary strategies at the beginning; just in the long run we have to switch to (asymptotically) optimal strategies. 
This {\color{cs}basic} observation will be used for the data-driven strategies constructed in Section \ref{sec:data_driven} below. 
\end{remark}

\section{Nonparametric estimation of the optimal value}\label{sec:rhobest}
The result in Proposition \ref{prop:standard_control} solves problem \eqref{eq:problem} for a \emph{known} drift function $b$. 
As we are, however, interested in finding a method for obtaining a strategy without this knowledge, we have to optimize the function using estimated values. 
This auxiliary statistical problem is treated in this section. 
Throughout this section, we assume that we are given a continuous record of observations $(X_s)_{0\leq s\leq T}$, $T>0$, of the uncontrolled diffusion process as introduced in Definition \ref{def:B}, where the drift $b\in{\color{cs}\Sigma(\C,A,\gamma,\sigma)}$ is unknown. 
Note that the diffusion coefficient $\sigma^2$ is identifiable using the quadratic variation of the semi-martingale $X$.
To simplify the exposition, we suppose that $\sigma^2(\cdot)$ is a known function as given above and that $X$ is started in the stationary distribution.

Let us introduce the operator
\[\Phi\colon {\color{cs}\Sigma(\C,A,\gamma,\sigma)}\to [0,\infty),\qquad b\mapsto \sup_{y\in[y_1,\bet]}\frac{g(y)}{\xi_b(y)},\]
with corresponding maximizers $y^*=y^*_b$, for $\xi=\xi_b$ given in \eqref{eq:xi}. 
Proposition \ref{prop:standard_control} states that $\Phi(b)$ is the optimal reward per time unit obtainable in problem \eqref{prop:standard_control} with known drift $b$, and $y^*_b$ is the optimal threshold.
In the current situation of unknown drift $b$, we aim at finding an estimator $\y$ for the threshold such that the expected regret per time unit
\[\Phi(b)-\frac{g}{\xi_b}(\y)\ =\ \left|\Phi(b)-\frac{g}{\xi_b}(\y)\right|\ \mbox{ is becoming `small' in $T$}.\]
The result \eqref{sol:standard_control} suggests that the crucial point for constructing optimal strategies in a purely data-driven way consists in replacing the function $\xi_b$ defined according to \eqref{eq:xi} by a sample-based analogue. 
We start with stating one first auxiliary result on the function $\xi=\xi_b$. 
The proof can again be found in the appendix. 

\begin{lemma}\label{lem:estimate_xi}
	There exist positive constants $M_1,M_2$ such that
	\[M_1\ \leq\ \inf_{b\in{\color{cs}\Sigma(\C,A,\gamma,\sigma)}}\inf_{x\in[y_1,\bet]}\xi_b(x)\ \leq\ \sup_{b\in{\color{cs}\Sigma(\C,A,\gamma,\sigma)}}\sup_{x\in[y_1,\bet]}\xi_b(x)\ \leq\ M_2.
	\]
\end{lemma}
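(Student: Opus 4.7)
The integrand in \eqref{eq:xi} is strictly positive on $\R$, so $\xi_b$ is strictly increasing there, and the infimum and supremum over $[y_1,\bet]$ reduce to $\xi_b(y_1)$ and $\xi_b(\bet)$, respectively. The plan is therefore to bound $\xi_b(y_1)$ from below and $\xi_b(\bet)$ from above, uniformly in $b\in\Sigma$.

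The main preparation will be a pair of $b$-uniform estimates: (i) a two-sided bound $c_1\leq \rho_b(y)\leq c_2$ on the compact set $[-A,\bet]$ (the upper bound is already contained in \eqref{eq:reg_rho_b}); and (ii) a uniform positive lower bound $\int_{-\infty}^0\rho_b(z)\d z\geq c_3$. Given these, both conclusions fall out by direct estimation of \eqref{eq:xi}. For the upper bound I would use $\int_{-\infty}^y\rho_b\leq 1$, $\sigma^2\geq\underline{\nu}^2$, and $\rho_b\geq c_1$ on $[0,\bet]$ to obtain $\xi_b(\bet)\leq 2\bet/(\underline{\nu}^2 c_1)$. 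For the lower bound, for $y\in[0,y_1]$ I would use $\int_{-\infty}^y\rho_b\geq\int_{-\infty}^0\rho_b\geq c_3$ together with $\sigma^2(y)\rho_b(y)\leq\overline{\nu}^2 c_2$ to obtain $\xi_b(y_1)\geq 2y_1 c_3/(\overline{\nu}^2 c_2)$.

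The hard part is extracting steps (i) and (ii) directly from the defining conditions of $\Sigma(\C,A,\gamma,\sigma)$, and this is the step I expect to be the main obstacle. My approach is to analyze the unnormalized invariant density $m_b(y):=\sigma^{-2}(y)\exp(\int_0^y 2b(s)/\sigma^2(s)\d s)$ regime-by-regime. On $[-A,A]$ the linear-growth bound $|b|\leq\C(1+|\cdot|)$ forces $|\int_0^y 2b/\sigma^2|\leq 2\C(1+A)A/\underline{\nu}^2$, giving a uniform two-sided bound on $m_b$ there; the same estimate on the finite interval $(A,\bet]$ extends the bound to all of $[-A,\bet]$. For $|y|>A$, the mean-reverting condition $b(y)\operatorname{sgn}(y)/\sigma^2(y)\leq -\gamma$ produces the uniform tail estimate $m_b(y)\leq \const\cdot\e^{-2\gamma(|y|-A)}$, which yields a uniform upper bound on the normalizer $C_{b,\sigma}=\int_\R m_b$. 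A uniform strictly positive lower bound on $C_{b,\sigma}$ is automatic because $\int_{-A}^A m_b$ is already bounded below. Dividing by $C_{b,\sigma}$ then yields the two-sided bound on $\rho_b$, and the same argument applied to $\int_{-\infty}^0 m_b$ delivers the constant $c_3$, completing the preparation.
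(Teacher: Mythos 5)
Your argument is correct, and it reaches the same two bounds the paper does from the same two structural ingredients (the linear-growth bound on $b$ over compacts and the mean-reversion condition $b\operatorname{sgn}(\cdot)/\sigma^2\leq-\gamma$ in the tails), but it packages them differently. The paper never touches the normalizing constant: it rewrites $\xi_b$ via the scale and speed functions as
\[
\xi_b(x)\ =\ 2\int_{\yn}^{x}\int_{-\infty}^y\frac{1}{\sigma^2(z)}\exp\Big(-2\int_z^y\frac{b(q)}{\sigma^2(q)}\,\d q\Big)\d z\,\d y,
\]
so that $C_{b,\sigma}$ cancels in the ratio $\rho_b(z)/\rho_b(y)$, and then bounds this kernel directly --- exponential decay from the mean reversion for $z<-A$, boundedness of $b/\sigma^2$ on the compact part. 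You instead keep the normalized density, which forces you to control $C_{b,\sigma}$ uniformly from above (via the tail estimate) and below (via the integral over $[-A,A]$) before you can convert your two-sided bound on the unnormalized density into one on $\rho_b$. That is extra work relative to the paper, but it buys you a reusable intermediate statement: uniform two-sided bounds on $\rho_b$ over compacts, which is precisely what justifies the positivity of the constant $a=\inf_{b}\min_{[y_1,\bet]}\rho_b$ used later in the definition of $\hat\xi_T$, and which also underlies \eqref{eq:reg_rho_b}. Both routes are valid; yours is slightly longer but self-contained and arguably cleaner about where the uniformity in $b$ comes from.
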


We proceed by constructing estimators $\hat\xi_T$ of the functional $\xi_b$ which are based on estimators of the invariant density $\rho_b$.
One first candidate estimator relies on diffusion local time $L_T^\bullet$ {(see, e.g., Chapter 9 in \cite{legall16})}
which may be introduced via the following approximation result, holding a.s.~for every $a\in\R$ and $T\geq0$,
\[
L_T^a(X) = \lim_{\ep\to0}\frac{1}{\ep}\int_0^T\mathds{1}\left\{a\leq X_s \leq a+\ep\right\}\d\langle X\rangle_s,\quad a\in\R.
\]
The above representation in particular suggests the interpretation of diffusion local time as the derivative of an empirical distribution function.
{Note further that $\E L_T^a(X)=T\rho_b(a)\sigma^2(a)$, $a\in\R$.
Both observations motivate the use of} $L_T^\bullet$ for constructing an (unbiased) estimator $\rho_T^\circ$ of $\rho_b$ by letting
\[
\rho_T^\circ(a)\coloneqq\frac{L_T^a(X)}{T\sigma^2(a)},\quad a\in\R.
\]
While the estimator $\rho_T^\circ$ allows for the seamless application of probabilistic results on diffusion local time, it is of minor interest with regard to real-world applications.
In particular, it is not obvious how the local time estimator could be replaced in situations where only discrete observations of the diffusion process $X$ are available. 
One very useful step towards finding practically more feasible estimators of $\rho_b$ (and, as a result, the functional $\xi_b$) consists in replacing $\rho_T^\circ$ with a kernel-type invariant density estimator.
In order to analyse the approximation properties of the kernel estimator, we however need to impose some regularity on $b$ and $\rho_b$. 
For doing so, we consider H\"older classes defined as follows:


\begin{defn}\label{def:Holder}
	Given $\beta,\mathcal{L}>0$, denote by $\mathcal{H}_{\R}(\beta,\mathcal{L})$ the \emph{H\"older class} (on $\R$) as the set of all functions $f\colon\R\to\R$ which are $l:=\lfloor \beta \rfloor$-times differentiable and for which
	\begin{align*}
		\|f^{(k)}\|_{\infty}&\le\mathcal{L}  \qquad\qquad\forall\, k=0,1,\ldots,l,\\
		\|f^{(l)}(\cdot+t)-f^{(l)}(\cdot)\|_{\infty}&\le\mathcal{L}|t|^{\beta- l}  \qquad \forall\, t\in\R.
	\end{align*}
	Set $\Sigma(\beta,\mathcal{L})\coloneqq\left\{b\in{\color{cs}\Sigma(\C,A,\gamma,\sigma)}\colon\ \rho_b\in\mathcal{H}_\R(\beta,\mathcal{L})\right\}$.
\end{defn}

Note that the assumption that $b\in{\color{cs}\Sigma(\C,A,\gamma,\sigma)}$ again implies certain regularity properties on the invariant density in the sense of Definition \ref{def:Holder}. 
More precisely, if $b\in{\color{cs}\Sigma(\C,A,\gamma,\sigma)}$, the invariant density $\rho_b$ is bounded and Lipschitz continuous due to \eqref{eq:reg_rho_b} which in turn means $b\in \Sigma(1,\ML)$.
Considering the class of drift coefficients $\Sigma(\beta,\mathcal{L})$, we use kernel functions fulfilling the following conditions,
\begin{equation}\label{kernel}
\begin{array}{r@{}l}
&{}\bullet\quad Q\colon\R\rightarrow \R \text{ is Lipschitz continuous and symmetric},\\ [3pt]
&{}\bullet\quad\operatorname{supp}(Q)\subset [-1/2,1/2],\\[3pt]
&{}\bullet\quad Q\text{ is of order }\lfloor\beta \rfloor\text{, i.e., } \int Q(u)\d u=1 \text{ and }\\
&{}\hspace*{8em} \int Q(u)u^j\d u=0 \text{ for } j=1,\ldots,\lfloor\beta\rfloor.
\end{array}
\end{equation}

\begin{lemma}[concentration of invariant density estimators]\label{lem:csi}
	Let $\X$ be a diffusion as in Definition \ref{def:B} {and $\com\subset \R$ be compact}.
	\begin{enumerate}
		\item[$\operatorname{(i)}$]
		There is a positive constant $\kappa$ such that, for any $T\geq 1$,
		\[
		\sup_{b\in{\color{cs}\Sigma(\C,A,\gamma,\sigma)}}
\E_b\left[\left\|\rho_T^\circ-\rho_b\right\|_{L^1(\com)}\right]\le \frac{\kappa}{\sqrt T}.
		\]
		\item[$\operatorname{(ii)}$]
		Assume that $b\in\Sigma=\Sigma(\beta,\mathcal L)$, for some $\mathcal L>0,\,\beta\geq 1$, and let $Q$ be a kernel function fulfilling \eqref{kernel}.
		Given some positive bandwidth $h$, define the estimator 
		\[
		\rho_{T,Q}(h)(x)\coloneqq \frac{1}{Th}\int_0^TQ\left(\frac{x-X_u}{h}\right)\d u,\quad x\in\R.
		\] 		
		Then, there is a positive constant $\nu$ such that, for any $T>0$,
		\[ 
		\sup_{b\in\Sigma(\beta,\mathcal L)}\E_b\left[\left\|\rho_{T,Q}(h) - \rho_b\right\|_{L^1(\com)}\right]
		\le \frac{\nu}{\sqrt T}+\frac{\mathcal Lh^\beta}{\lfloor \beta\rfloor!}\int |u^{\beta }Q(u)|\d u.
		\]
	\end{enumerate}
\end{lemma}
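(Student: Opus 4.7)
Everything hinges on two ideas. First, by the occupation times formula $\int_0^T f(X_u)\,\d u = \int f(a) L_T^a / \sigma^2(a)\,\d a$, the kernel estimator factors as $\rho_{T,Q}(h) = Q_h * \rho_T^\circ$ with $Q_h(\cdot):=h^{-1}Q(\cdot/h)$, so (ii) reduces to (i) for the stochastic part plus a standard kernel bias analysis. Second, the statistical heart of (i) is a uniform variance bound $\Var_b(L_T^a) \leq c T$ for $a\in \com$ and $b\in\Sigma(\C,A,\gamma)$; with this in hand, the rest is routine.

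\textbf{Proof of (i).} Unbiasedness ($\E_b L_T^a = T\rho_b(a)\sigma^2(a)$) and Jensen give pointwise
\[
\E_b\bigl[|\rho_T^\circ(a) - \rho_b(a)|\bigr]\ \leq\ \frac{\sqrt{\Var_b(L_T^a)}}{T\sigma^2(a)}\ \leq\ \frac{\sqrt{c}}{\underline\nu^2\sqrt T},
\]
and I would obtain the variance bound via Tanaka's formula
\[
L_T^a\ =\ |X_T-a| - |X_0-a| - \int_0^T \operatorname{sgn}(X_s-a) b(X_s)\,\d s - \int_0^T \operatorname{sgn}(X_s-a)\sigma(X_s)\,\d W_s.
\]
The stochastic integral has variance $\leq\overline\nu^2 T$; the boundary terms have uniformly bounded second moments under stationarity, owing to the polynomial moment control on $\rho_b$ in \eqref{eq:reg_rho_b}; the drift integral is handled by a Poisson-equation step, solving $\mathscr A_b u_a = -\{\operatorname{sgn}(\cdot-a) b - \E_{\rho_b}[\operatorname{sgn}(X-a)b(X)]\}$ (with $\mathscr A_b$ the generator of $X$) explicitly in terms of $\rho_b$ and $\sigma^2$. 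The dissipativity $b\operatorname{sgn}/\sigma^2\leq -\gamma$ outside $[-A,A]$ keeps $u_a$ and $u_a'$ uniformly bounded for $a\in\com$ and $b\in\Sigma$, so It\^o applied to $u_a(X)$ turns the drift term into a martingale of variance $O(T)$ plus a bounded remainder. Integrating the pointwise bound over the compact $\com$ yields the claim.

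\textbf{Proof of (ii).} The decomposition
\[
\rho_{T,Q}(h) - \rho_b\ =\ Q_h * (\rho_T^\circ-\rho_b) + (Q_h * \rho_b - \rho_b)
\]
splits the error into variance and bias parts. For the variance part, Fubini together with $\operatorname{supp}(Q_h) \subset [-h/2, h/2]$ gives
\[
\|Q_h*(\rho_T^\circ-\rho_b)\|_{L^1(\com)}\ \leq\ \|Q\|_{L^1}\cdot\|\rho_T^\circ-\rho_b\|_{L^1(\com_h)},
\]
where $\com_h$ is the $h/2$-enlargement of $\com$ (still compact); taking expectation and invoking (i) on $\com_h$ delivers the $\nu/\sqrt T$ term. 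For the bias part, Taylor expand $\rho_b(x-hu)$ around $x$ to order $l:=\lfloor\beta\rfloor$ and integrate against $Q(u)$; the vanishing moments in \eqref{kernel} annihilate the terms of orders $1,\dots,l$, leaving
\[
(Q_h*\rho_b)(x) - \rho_b(x)\ =\ \int\frac{(-hu)^l}{l!}\bigl[\rho_b^{(l)}(\xi_{u,x})-\rho_b^{(l)}(x)\bigr]Q(u)\,\d u
\]
with $\xi_{u,x}$ between $x$ and $x-hu$. The Hölder bound $|\rho_b^{(l)}(\xi_{u,x})-\rho_b^{(l)}(x)|\leq\mathcal L(h|u|)^{\beta-l}$ then produces the stated bias estimate uniformly in $x$.

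\textbf{Main obstacle.} The genuinely delicate step is making the variance bound on $L_T^a$ uniform over the whole class $\Sigma(\C,A,\gamma)$ and over $a\in\com$. The Poisson-equation solution involves the factor $1/(\sigma^2\rho_b)$, which blows up in the tails, so its boundedness must come from a matching decay of $\int_{-\infty}^{\cdot}\phi\rho_b$; this cancellation is precisely what the dissipativity in Definition \ref{def:B} and the estimates \eqref{eq:reg_rho_b} are calibrated to provide, and tracking constants carefully throughout this argument is where the work lies.
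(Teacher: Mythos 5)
Your proposal is correct in substance but takes a genuinely different route from the paper's for part (ii), and it fills in part (i), which the paper only asserts to be ``similar.'' The paper proves (ii) directly: it introduces the function $g_y(u)=\frac{1}{\rho_b(u)\sigma^2(u)}\int Q\bigl(\frac{y-x}{h}\bigr)\rho_b(x)(\mathds{1}\{u>x\}-F_b(u))\,\d x$ (a Poisson-equation--type solution for the kernel-smoothed test function), applies It\^o's formula to obtain the decomposition $\sqrt{t}h(\rho_{t,Q}(h)(y)-\E_b\rho_{t,Q}(h)(y))=t^{-1/2}\rd_t^y+t^{-1/2}\Ma_t^y$, and then bounds the remainder via a supremum estimate and the martingale via Burkholder--Davis--Gundy after a careful region-by-region bound on $|g_y|^2$; the bias is handled exactly as you do. You instead observe via the occupation time formula that $\rho_{T,Q}(h)=Q_h*\rho_T^\circ$, so that (ii) reduces to (i) by Young/Fubini plus the standard kernel bias expansion, and you prove (i) by Tanaka's formula plus a Poisson equation for the drift integral. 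Both arguments rest on the same Poisson-equation/martingale machinery and on the same tail cancellation (the paper's version is its uniform bound on $(1-F_b)/(\sigma^2\rho_b)$); your version has the merit of cleanly isolating the probabilistic core (a uniform $O(T)$ variance bound for $L_T^a$) from the approximation theory and of making the implication (i)~$\Rightarrow$~(ii) explicit, while the paper's version avoids Tanaka and local time altogether by working with smooth test functions, with the correct $h$-dependence of the constants falling out of the $L^2$-norm of the rescaled kernel. One small correction to your sketch of (i): since $\phi_a=\operatorname{sgn}(\cdot-a)\,b$ grows linearly, the Poisson solution satisfies $|u_a'(x)|\lesssim 1+|x|$ rather than being uniformly bounded; what you actually need (and what holds, by the polynomial moment bounds on $\rho_b$ stated after \eqref{eq:reg_rho_b}) is that $\int (u_a')^2\sigma^2\rho_b$ and $\E_b[u_a^2(X_0)]$ are bounded uniformly over $a\in\com$ and $b\in\Sigma(\C,A,\gamma)$ --- with that adjustment the variance bound $\Var_b(L_T^a)\leq cT$ and hence your whole argument goes through. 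Also note that the constant from (i) applied to the enlarged set $\com_h$ depends on $h$; since in the application $h\le 1$, simply invoke (i) on the fixed compact $\com_1$.
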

The arguments of the proof are similar to the investigation in Section 5 in \cite{cacs18} where density estimation in supremum-norm is considered.
For the $L^1$ risk considered here, we obtain a faster convergence rate, without any logarithmic terms.
To shorten notation, let 
\[\hat\rho_T(z)\coloneqq \rho_{T,Q}(T^{-1/2})(z)=\frac{1}{\sqrt T}\int_0^TQ\left(\sqrt T(z-X_s)\right)\d s,\quad z\in\R.\]
By just plugging in and using the bound from Lemma \ref{lem:estimate_xi}, we now define
\begin{equation*}
\begin{split}
\xi_T^\circ(x)&\coloneqq \max\left\{M_1,\ 2\int_{0}^x\frac{1}{(\rho_T^\circ(y)\vee a)\sigma^2(y)}\int_{0}^y\rho_T^\circ(z)\d z\d y\right\},\\
\hat\xi_T(x)&\coloneqq \max\left\{M_1,\ 2\int_{0}^x\frac{1}{(\hat\rho_T(y)\vee a)\sigma^2(y)}\int_{0}^y\hat\rho_T(z)\d z\d y\right\},
\end{split}\quad x\in\R,
\end{equation*}
for $a=\inf_{b\in{\color{cs}\Sigma(\C,A,\gamma,\sigma)}}\min_{x\in [y_1,\bet]}\rho_b(x)>0$. 

{\color{cs}
\begin{remark}
Note that, taking into account Definition \ref{def:B}, for any $b\in\Sigma=\Sigma(\C,A,\gamma,\sigma)$ and $x\in[0,\bet]$, $C_{b,\sigma}\sigma^2(x)\rho_b(x)=\exp\left(2\int_0^x\frac{b(v)}{\sigma^2(v)}\d v\right)$ is bounded from below by
\begin{align*}
\exp\left(-2\int_0^x\frac{|b(v)|}{\sigma^2(v)}\d v\right)&\ge \exp\left(-2\underline\nu^{-2}\C\int_0^x(1+v)\d v\right)\\
&\ge\exp\left(-2\underline\nu^{-2}\C\int_0^\bet (1+v)\d v\right),
\end{align*}
implying that, for any $b\in\Sigma$,
\[
\min_{x\in[y_1,\bet]}\rho_b(x)\ge\min_{x\in[0,\bet]}\rho_b(x)\ge C_{b,\sigma}^{-1}\underline\nu^{-2}\exp\left(-\underline\nu^{-2}\C\left(2\bet+\bet^2\right)\right).
\]
Since tedious but straightforward calculations show that
\[
\sup_{b\in\Sigma(\C,A,\gamma,\sigma)}C_{b,\sigma}\le\underline\nu^{-2}\exp\left(\underline\nu^{-2}2A\C(1+A)\right)\left(2A+\gamma^{-1}\right),\]
the previous considerations allow for a concrete specification of $a$. 
Note however that we just introduce this parameter to guarantee that the estimators of $\rho$ do not attain unreasonably low values. 
Choosing a constant $a$ that is too small would only influence the following results in the sense that the specified constants would have to be increased. 
In addition, for a sufficiently large amount of data, the exact choice of the parameter $a$ will often not be relevant as we will usually automatically see $\hat\rho_T(y)\geq  a$.
\end{remark}}

We now take
\[\yy \in\arg\max_{y\in[y_1,\bet]}\frac{g}{\xi_T^\circ}(y)\quad\text{ and }\quad \y\in\arg\max_{y\in[y_1,\bet]}\frac{g}{\hat\xi_T}(y). \]
Using this, we obtain:

\begin{proposition}\label{prop:nonparam_estimator}
	There exist constants $C_1,C_2>0$ such that
	\begin{align*}
	\sup_{b\in{\color{cs}\Sigma(\C,A,\gamma,\sigma)}}\E_b\left[\Phi(b)-\frac{g}{\xi_b}(\yy)\right]	&\le\frac{C_1}{\sqrt T},\\ 
	\sup_{b\in\Sigma(\beta,\LL)}\E_b\left[\Phi(b)-\frac{g}{\xi_b}(\y)\right]&\le\ \frac{C_2}{\sqrt T}. 
	\end{align*}
\end{proposition}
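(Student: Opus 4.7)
The plan is to reduce the regret bound to a question about the $L^1$ risk of the invariant density estimator via a four-step argument.

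\textbf{Step 1 (regret decomposition).} Let $y^*=y^*_b\in\arg\max_{y\in[y_1,\zeta]} g(y)/\xi_b(y)$, so $\Phi(b)=g(y^*)/\xi_b(y^*)$. Since $\yy$ maximizes $g/\xi_T^\circ$ on $[y_1,\zeta]$, we have $g(y^*)/\xi_T^\circ(y^*)\le g(\yy)/\xi_T^\circ(\yy)$. Adding and subtracting, I obtain
\[
\Phi(b)-\frac{g(\yy)}{\xi_b(\yy)}
\ \le\ \left(\frac{g(y^*)}{\xi_b(y^*)}-\frac{g(y^*)}{\xi_T^\circ(y^*)}\right)
+\left(\frac{g(\yy)}{\xi_T^\circ(\yy)}-\frac{g(\yy)}{\xi_b(\yy)}\right)
\ \le\ 2\,\|g\|_{\infty,[y_1,\zeta]}\sup_{y\in[y_1,\zeta]}\Big|\tfrac{1}{\xi_b(y)}-\tfrac{1}{\xi_T^\circ(y)}\Big|.
\]
Using Lemma \ref{lem:estimate_xi} ($\xi_b\ge M_1$) together with $\xi_T^\circ\ge M_1$ by construction, this is bounded by $(2\|g\|_\infty/M_1^2)\sup_{y\in[y_1,\zeta]}|\xi_b(y)-\xi_T^\circ(y)|$.

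\textbf{Step 2 (pathwise control of $\xi$-difference).} Writing $F_b(y):=\int\rho_b$ and $F_T^\circ(y):=\int\rho_T^\circ$ over the same integration range appearing in the definitions of $\xi_b$ and $\xi_T^\circ$, and observing that truncating with the outer $\max\{M_1,\cdot\}$ can only make $|\xi_b-\xi_T^\circ|$ smaller (since $\xi_b\ge M_1$), I get
\[
|\xi_b(y)-\xi_T^\circ(y)|\ \le\ \tfrac{2}{\underline\nu^2}\int_{y_0}^{y}\left|\frac{F_b(z)}{\rho_b(z)}-\frac{F_T^\circ(z)}{\rho_T^\circ(z)\vee a}\right|\d z.
\]
Decompose the integrand as
\[
\frac{F_b}{\rho_b}-\frac{F_T^\circ}{\rho_T^\circ\vee a}
\ =\ \frac{F_b-F_T^\circ}{\rho_b}+F_T^\circ\cdot\frac{(\rho_T^\circ\vee a)-\rho_b}{\rho_b(\rho_T^\circ\vee a)}.
\]
The denominators are uniformly bounded below by $a>0$; $F_T^\circ$ is uniformly bounded (being a partial integral of a probability density); and crucially, since $\rho_b\ge a$, projection onto $[a,\infty)$ is a contraction toward $\rho_b$, so $|(\rho_T^\circ\vee a)-\rho_b|\le|\rho_T^\circ-\rho_b|$ pointwise. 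Combining these and using $|F_b(z)-F_T^\circ(z)|\le\|\rho_b-\rho_T^\circ\|_{L^1([y_0,\zeta])}$ uniformly in $z\in[y_0,\zeta]$ yields, for a constant $C$ depending only on $\underline\nu,a,\zeta,y_0$,
\[
\sup_{y\in[y_1,\zeta]}|\xi_b(y)-\xi_T^\circ(y)|\ \le\ C\,\|\rho_T^\circ-\rho_b\|_{L^1([y_0,\zeta])}.
\]

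\textbf{Step 3 (taking expectations).} Combining Steps 1 and 2 and applying Lemma \ref{lem:csi}(i) with $\com=[y_0,\zeta]$ gives
\[
\sup_{b\in\Sigma(\C,A,\gamma)}\E_b\!\left[\Phi(b)-\tfrac{g}{\xi_b}(\yy)\right]\ \le\ \tfrac{2\|g\|_\infty C}{M_1^2}\cdot\tfrac{\kappa}{\sqrt T}\ =:\ \tfrac{C_1}{\sqrt T}.
\]
The second assertion follows by a verbatim repetition with $(\hat\xi_T,\hat\rho_T)$ in place of $(\xi_T^\circ,\rho_T^\circ)$ and application of Lemma \ref{lem:csi}(ii) with $h=T^{-1/2}$; the bias term $h^\beta=T^{-\beta/2}$ is $O(T^{-1/2})$ since $\beta\ge 1$.

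\textbf{Main obstacle.} The delicate point is Step 2: the quantity $\xi_b$ involves the ratio $F_b/\rho_b$, and naively one would fear needing uniform ($L^\infty$) control of $\rho_T^\circ-\rho_b$ to bound $1/\rho_T^\circ-1/\rho_b$. The truncation $\rho_T^\circ\vee a$ together with the invariant lower bound $\rho_b\ge a$ on $[y_0,\zeta]$ circumvents this and allows an $L^1$-only bound, which is essential for obtaining the fast $T^{-1/2}$ rate without logarithmic losses (as Lemma \ref{lem:csi} promises only $L^1$ error with no $L^\infty$ companion). Everything else is routine algebra and direct application of previously established lemmata.
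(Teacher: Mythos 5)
Your proposal is correct and follows essentially the same route as the paper's proof: the same two-term decomposition bounding the regret by $2\sup_{y\in[y_1,\bet]}|\tfrac{g}{\xi_b}(y)-\tfrac{g}{\xi_T^\circ}(y)|$, the same reduction via $M_1^{-2}$ to $\sup_y|\xi_T^\circ(y)-\xi_b(y)|$, the same splitting of that difference into a numerator term and a truncated-reciprocal term each controlled by $\|\rho_T^\circ-\rho_b\|_{L^1}$, and the same application of Lemma \ref{lem:csi}. Your explicit remark that $x\mapsto x\vee a$ is a contraction toward $\rho_b\ge a$ is just a cleaner phrasing of the $a^{-2}$ Lipschitz bound the paper uses implicitly; no substantive difference.
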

\begin{proof}
	Note that
	\begin{align*}
	\Phi(b)-\frac{g}{\xi_b}(\y)
	&=\frac{g}{\xi_b}(y^\ast_b)-\frac{g}{\hat\xi_T}(\y)+\frac{g}{\hat\xi_T}(\y)-\frac{g}{\xi_b}(\y)\\
	&=\max_{y\in[y_1,\bet]}\frac{g}{\xi_b}(y)-\max_{y\in[y_1,\bet]}\frac{g}{\hat\xi_T}(y)+\frac{g}{\hat\xi_T}(\y)-\frac{g}{\xi_b}(\y)\\
	&\le 2\max_{y\in[y_1,\bet]}\left|\frac{g}{\xi_b}(y)-\frac{g}{\hat\xi_T}(y)\right|.
	\end{align*}
	We are thus led to analysing
	\begin{align*}
	\E_b\left[\max_{y\in[y_1,\bet]}\Big|\frac{g}{\hat\xi_T}(y)-\frac{g}{\xi_b}(y)\Big|\right]
	&\le\E_b\left[\max_{y\in[y_1,\bet]}g(y)\Big|\frac{1}{\hat\xi_T}(y)-\frac{1}{\xi_b}(y)\Big|\right]\\
	&\le  LM_1^{-2}\E_b\left[\max_{y\in[y_1,\bet]}\left|\hat\xi_T(y)-\xi_b(y)\right|\right],
	\end{align*}
	where $L$ denotes the maximum of $g$ on $[y_1,\bet]$.
To shorten notation, let $\hat I_T(y)\coloneqq \int_0^y\hat\rho_T(z)\d z$, $I(y)\coloneqq\int_0^y\rho_b(z)\d z$.
Then, for any $x\in[y_1,\bet]$ and $\com\coloneqq[0,\bet]$,
\begin{align*}
\left|\hat\xi_T(x)-\xi_b(x)\right|
&\le 2\left|\int_{0}^x\left(\frac{\hat I_T(y)}{\left(\hat\rho_T(y)\vee a\right)\sigma^2(y)}-\frac{I(y)}{\rho_b(y)\sigma^2(y)}\right)\d y\right|\\
	&\le 2\left|\int_{0}^x\frac{\hat I_T(y)-I(y)}{\left(\hat\rho_T(y)\vee a\right)\sigma^2(y)}\d y\right|\\
	&\hspace*{3em}
	+2\left|\int_{0}^x\left(\frac{I(y)}{\left(\hat\rho_T(y)\vee a\right)\sigma^2(y)}-\frac{I(y)}{\rho_b(y)\sigma^2(y)}\right)\d y\right|\\
	&\le \int_{\com}\left|\hat\rho_T(z)-\rho_b(z)\right|\d z\cdot \left(2\int_{0}^x\frac{\d y}{\left(\hat\rho_T(y)\vee a\right)\sigma^2(y)}\right)\\&\hspace*{3em}
	+2\left|\int_{0}^xI(y)\frac{\rho_b(y)-\left(\hat\rho_T(y)\vee a\right)}{\rho_b(y)\left(\hat\rho_T(y)\vee a\right)\sigma^2(y)}\d y\right|\\
	&\le 
2\|\hat\rho_T-\rho_b\|_{L^1(\com)}\left(\frac{\bet}{a\underline\nu^2}\left(1+\frac{1}{a}\right)\right).
	\end{align*}
The claim now immediately follows from Lemma \ref{lem:csi}.
\end{proof}

The previous result can be interpreted in terms of a statistical approach to the Faustmann problem as follows: 
if we first collect data for the uncultivated forest stand for $T$ time units and then start using the estimator for the unknown optimal threshold based on these observations, then the expected regret per time unit is of order $T^{-1/2}$. 
A more realistic problem is of course how to simultaneously learn and exploit. 
This is treated in the next section.

\section{Data driven impulse controls}\label{sec:data_driven}
When we try to use the estimator for $\rho$ to approximate the optimal strategy as described in the previous section, we face a classical trade-off between exploration and exploitation. 
On the one hand, we want to maximize our expected reward and therefore use one-sided impulse control strategies with estimated optimal threshold $\widehat{y}_T$ as introduced in the previous section. 
On the other hand, using this strategy, we cannot learn about {the diffusion dynamics} on the interval between this threshold and $\bet$ and therefore our procedure cannot even be expected to converge. 

\begin{figure}
	\centering
	\includegraphics[width=0.9\linewidth]{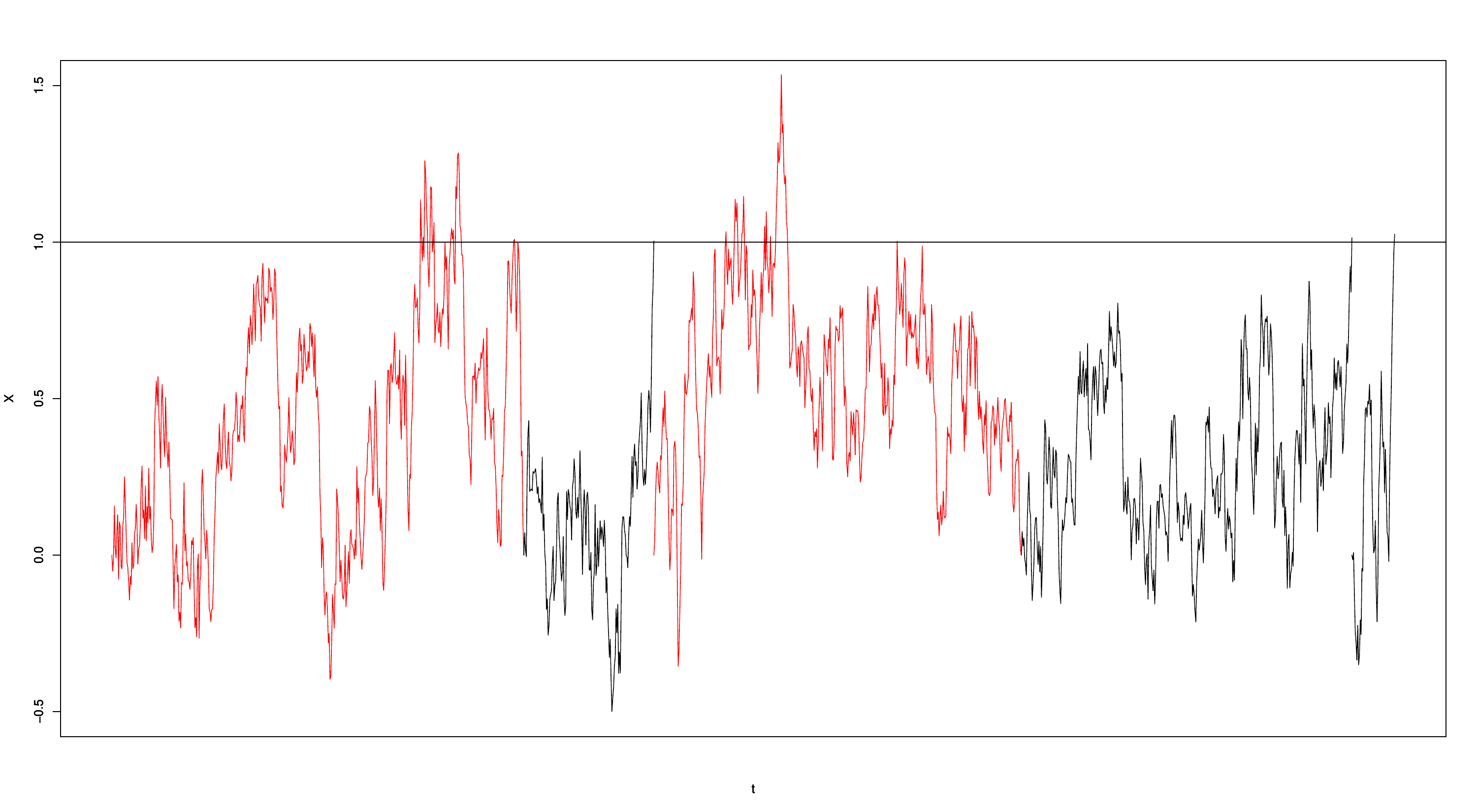}
	\caption{A path controlled using a data driven impulse strategy with exploration (red) and exploitation (black) periods}
	\label{fig:plot_expl_exploit}
\end{figure}

We propose a strategy with an decaying $\epsilon$-greedy-inspired action selection as follows, see Figure \ref{fig:plot_expl_exploit}: 
we use stopping times $\tau_1,\tau_2,\ldots$ to separate the time into different periods. 
Each of these periods is either an \emph{exploration period} or an \emph{exploitation period}. 
In each exploration period, we let the process run until it first reaches the upper boundary $\bet$ and then visits $\yn$ again. 
Here, many other choices would be possible. 
It would, for example, be natural to control the process back to $\yn$ whenever it reaches $\bet$. 
We choose our definition primarily for mathematical convenience, see below. 

We denote by $S_t$ the (random) time spent in the exploration periods until $t$, i.e., the process $(S_t)_t$ has linear growth in the exploration periods and is constant in the exploitation periods. 
We denote the left inverse of $t\mapsto S_t$ by $S^{-1}$ and write ${X}':=(X^K_{S^{-1}_s})_{s\geq 0}$. 
As exploitation periods start and end in\ $\yn$, the process $X'$  is -- by the strong Markov property -- a diffusion process with same dynamics as $X$ (not killed as long as $S_t\to \infty$). 
Based on this observation, we may use one of the estimators $\widehat{y}'$ from\ Section \ref{sec:rhobest}, but now for $X'$ instead of $X$, and write \[\widehat{y}_t:=\widehat{y}'_{S_t\wedge mt^{2/3}}\]
for the estimated optimal threshold at time $t$. Here, for technical reasons only (see the proof below), at time $t$, we limit the data from the exploration periods to an observation length of $s=mt^{2/3}$ for some constant $m$ specified below.

Note that the estimated threshold is constant in the exploitation periods, i.e. in exploitation periods $[\tau_{n-1},\tau_{n})$ it holds that
\[\widehat{y}_t=\widehat{y}_{\tau_{n-1}}. \]
There, we use a threshold impulse control strategy with threshold $\widehat{y}_T$ and then again restart the process in $\yn$.  We call such a strategy $K$ combining {exploration and exploitation periods} a \emph{data driven impulse strategy}.

The main question now is how to balance the time $S_T$ spent in exploration periods and $T-S_T$ in exploitation periods. 
Obviously, $S_T$ needs to converge to $\infty$ and $S_T/T$ to 0 to obtain convergence to the optimal value. 
We now analyse the rate by balancing the error rates from exploration and exploitation:\ 
In the exploration periods, we receive no payoff, hence the cumulated regret until $T$\ is of order $S_T$. 
In the exploitation periods, the regret is caused by choosing a suboptimal threshold. 
By Proposition \ref{prop:nonparam_estimator}, the cumulated regret is bounded by $c\int_0^T S_t^{-1/2}dt$. 
Balancing out leads to choosing $S_T\approx T^{2/3}$. 
More precisely, we use the following condition \eqref{eq:S_t-bed}. The existence of such strategies is straightforward using elementary renewal theory, see \cite[Appendix B]{christensen2021learning} for a very similar construction.

%

\begin{thm}\label{thm:main}
	Let $K$ be a data driven impulse strategy as described above with exploration time $S_T$ that grows as $T^{2/3}$ in the following sense: there exist constants $m,M>0$ such that 
	\begin{align}\label{eq:S_t-bed}
		\mathds P_b(S_T\leq mT^{2/3})\lesssim T^{-1/3}\mbox{ and }\limsup_{T\to\infty}T^{-2/3}\E_bS_T< M
	\end{align}
	for all\ $b\in\Sigma(\beta,\LL)$.
	Then, the difference of optimal reward rate $\Phi(b)$ and the expected data driven reward rate is of uniform order $O\big({T^{-1/3}}\big)$, that is, there exists $D>0$ such that 
	\[\limsup_{T\to\infty}{T^{1/3}}\left(\Phi(b)-\frac{1}{T} \E_b \sum_{n:\tau_n\leq T}g(X_{\tau_n-}^K)\right)\ \leq\ D.\]
	for all $b\in\Sigma(\beta,\LL)$.
\end{thm}


\begin{proof}
	We fix some drift $b\in\ \Sigma$.
	We write $Z_t=t-S_t$ for the time in the exploitation periods until\ $t$. 
	Keeping the convention $g(\yn)=0$ in mind, it holds that
	\begin{align*}
	&\E_b \sum_{n:\tau_n\leq T}g(X_{\tau_n-}^K)\ =\ \E_b \sum_{\substack{n:\tau_n\leq T\\ \text{exploitation period}}}g(\widehat{y}_{\tau_{n-1}})\\
	&\hspace*{1.5em}=\E_b \sum_{\substack{n:\tau_n\leq T\\ \text{exploitation period}}}\frac{g(\widehat{y}_{\tau_{n-1}})}{\E_b\left[\tau_n-\tau_{n-1}\mid\F_{\tau_{n-1}}\right]}{\E_b\left[\tau_n-\tau_{n-1}\mid\F_{\tau_{n-1}}\right]}\\
	&\hspace*{1.5em}=\E_b \sum_{\substack{n:\tau_n\leq T\\ \text{exploitation period}}}\frac{g(\widehat{y}_{\tau_{n-1}})}{\xi_b(\widehat{y}_{\tau_{n-1}})}{\E_b\left[\tau_n-\tau_{n-1}\mid\F_{\tau_{n-1}}\right]}\\
	&\hspace*{1.5em}=\E_b \sum_{\substack{n:\tau_n\leq T\\ \text{exploitation period}}}\E_b\left[\frac{g(\widehat{y}_{\tau_{n-1}})}{\xi_b(\widehat{y}_{\tau_{n-1}})}{(\tau_n-\tau_{n-1})}\mid\F_{\tau_{n-1}}\right]\\
	&\hspace*{1.5em}=\E_b \sum_{\substack{n:\tau_n\leq T}}\E_b\left[\int_{\tau_{n-1}}^{\tau_n}\frac{g}{\xi_b}(\widehat{y}_t)\d Z_t\mid\F_{\tau_{n-1}}\right]\\
	&\hspace*{1.5em}\geq \E_b \sum_{\substack{n:\tau_{n-1}\leq T}}\E_b\left[\int_{\tau_{n-1}}^{\tau_n}\frac{g}{\xi_b}(\widehat{y}_t)\d Z_t\mid\F_{\tau_{n-1}}\right]-c\ = \E_b\left[\int_{0}^{\gamma_T}\frac{g}{\xi_b}(\widehat{y}_t)\d Z_t\right]-c,
	\end{align*}
	where $\gamma_T=\sup\left\{\tau_{m+1}: \tau_m\leq T\right\}\geq T$ and $c=\Phi(b)\xi_b(\bet)$. 
Therefore,
	\[\E_b \sum_{n:\tau_n\leq T}g(X_{\tau_n-}^K)\ \geq\  \E_b \left[\int_{0}^{T}\frac{g}{\xi_b}(\widehat{y}_t)\d Z_t\right]-c.\]
	We obtain
	\begin{align*}
	\Phi(b)T-\E_b \sum_{n:\tau_n\leq T}g(X_{\tau_n-}^K)&\leq\ \Phi(b)T-\E_b\left[\int_{0}^{T}\frac{g}{\xi_b}(\widehat{y}_t)\d Z_t\right]+c\\
	&=\ \Phi(b)\E_b S_T+\E_b\left[\int_{0}^{T}(\Phi(b)-\frac{g}{\xi_b}(\widehat{y}_t))\d Z_t\right]+c\\
	&\leq \Phi(b)\E_b S_T+\E_b\left[\int_{0}^{T}(\Phi(b)-\frac{g}{\xi_b}(\widehat{y}_t))\d t\right]+c\\
	&= \Phi(b)\E_b S_T+\int_{0}^{T}\E_b\left[\Phi(b)-\frac{g}{\xi_b}(\widehat{y}_{S_t}')\right]\d t+c.
	\end{align*}
	Proposition \ref{prop:nonparam_estimator} yields that
\begin{align*}
	\E_b\left[\Phi(b)-\frac{g}{\xi_b}(\widehat{y}_{S_t}')\right]\lesssim& \mathds P_b(S_t\leq mt^{2/3})+(mt^{2/3})^{-1/2}\lesssim t^{-1/3}.
\end{align*}
Putting pieces together
	\begin{align*}
	\Phi(b)T-\E_b \sum_{n:\tau_n\leq T}g(X_{\tau_n-}^K)&\lesssim\ T^{2/3}+\int_{0}^{T}t^{-1/3}\d t+c\\
	&\lesssim T^{2/3}.
	\end{align*}
	Dividing by $T$ yields the result. 	
\end{proof}

\section{Conclusion}\label{sec:concl}
We studied a statistical version of the stochastic Faustmann timber harvesting problem. 
Our approach relied on methods both from stochastic impulse control and nonparametric statistics of diffusion processes. 
Based on this, we proposed a method to deal with the exploration vs.~exploitation problem and found a rate of convergence of $T^{^{-1/3}}$. 
The proposed method separates exploration and exploitation periods.
{An important feature of the studied control problem is the fact that the solution for known dynamics of the underlying diffusion process depends on the unknown drift via the invariant density. 
In the framework of continuous observations, this quantity can be estimated with a parametric rate of convergence.
As a consequence, we do not have to pay for allowing for a very general nonparametric structure (as compared to standard parametric regimes) in the proposed data-driven control strategy.}
{
Moreover, since we were able to reduce our analysis to bounding the $L^1$ risk of the density estimator, no logarithmic factors are contained in the convergence rate (in contrast to controlling the $\sup$-norm risk). 
In case that knowledge of the drift itself is required for proposing the optimal control strategy, more restrictive assumptions would be necessary and the convergence rate for the estimation procedure would decrease. }

Of course, many refinements of this data-driven method could be studied. 
The main reason for considering our approach was technical: the process $X'$ used for the estimation is a diffusion process itself. 
Furthermore, many alternative models could be considered.\ 
Keeping the motivation from natural resource management in mind, another possibility could be to leave a (small, representative) part of the forest stand uncultivated to learn the dynamics of the forest growth dynamics from this and apply the observations directly to the cultivated part. 
This makes the situation easier as the exploration and exploitation problems are naturally separated.

It is of some interest to describe the problem from a bandit-perspective. 
There, the choice of the intervention level $y$ in each cycle could be seen as the arm chosen by the decision maker producing random reward $g(y)/\tau_y$. 
Continuous ($\mathcal X$-armed) bandit problems of a similar type have been studied extensively in the literature over the last years, see \cite{auer2007improved,bubeck2011x,locatelli2018adaptivity} to name just a few. 
There are, however, some differences. 
First, we do not assume smoothness of the reward ($g$ was assumed to be continuous just for simplicity). 
Second, our reward structure is of a nonstandard form and is unbounded in general. Third, the error term depends on the chosen arm and the underlying distributions.

Overall, this paper makes a first (small) step to bringing together techniques from stochastic control with methods from statistics for stochastic processes to find a way to both learn the dynamics of the underlying process and control in a reasonable way at the same time. 
Our hope is that it can stimulate future research in this field. 


\begin{appendix}

\section{Some Proofs}
\begin{proof}[Proof of Lemma \ref{lem:reflected}] Let $K$ be an admissible impulse control strategy. As $X^{\yn}_T\ \leq_{st}\ X^K_T$ is just a distributional property, we are free to construct a realization of $\tilde X^{\yn}=\tilde X^{\yn,K}$ based on the process $X^K$ using the following coupling: 
	Let $\tilde X^{\yn}$ and $X^K$ run coupled until a state $\geq \yn$ is reached. 
	Then, we restart $\tilde X^{\yn}$ in\ $-1$ and let both processes run independently until the first time the two paths meet again. 
	Then, we couple the paths again and follow this rule. Then $X^0$ and $\tilde X^{\yn}$ have the same law. Furthermore, for each $t$ and each\ $\omega$, we have $\tilde X^{\yn}_t(\omega)\leq X^K_t(\omega)$, proving in particular, $\tilde X^{\yn}_T\ \leq_{st}\ X^K_T$ and therefore $ X^{\yn}_T\ \leq_{st}\ X^K_T$.
\end{proof}

\begin{proof}[Proof of Proposition \ref{prop:standard_control}]
	We first consider the process $M=(\xi(X_t)-t)_{t\geq0}$ for the uncontrolled process $X$. By \cite[Proposition 2.6]{helmes2017continuous} we see that it is a local martingale  with localizing family $(\tau_y)_{y>0}$. For a bounded stopping time $\sigma$ with $X_\sigma\geq 0$ we obtain by bounded convergence
$$
		\E_b M_\sigma=\E_b\lim_{y\to\infty}(\xi(X_{\sigma\wedge \tau_y})-\sigma\wedge \tau_y)= \lim_{y\to\infty}\E_bM_{\sigma\wedge\tau_y}=\E_bM_{0}.
$$
		Now, let\ $K=(\tau_n)_{n\in\N}$ be an admissible impulse control strategy. 
	As $X^K$ runs uncontrolled on $[\tau_{n-1},\tau_n)$ for all $n$, the strong Markov property together with the previous observation yields 
	\begin{equation*}
	\E_b\left[\xi(X_{\tau_n-\wedge T}^K)-\xi(X^K_{\tau_{n-1}\wedge T})-(\tau_n\wedge T-\tau_{n-1}\wedge T)\right] \ = \ 0,
	\end{equation*}
	where we use the convention that $\tau_0=0$.
	Rearranging terms and using dominated convergence, we obtain for all $\gamma>0$
	\begin{align*}
	0&= \ -\gamma\E_b\left[\sum_{n=1}^\infty\left(\xi(X^K_{\tau_n-\wedge T})-\xi(X^K_{\tau_{n-1}\wedge T})-(\tau_n\wedge T-\tau_{n-1}\wedge T)\right)\right]\\
	&=\ -\gamma\E_b\left[\sum_{n=1}^\infty\left(\xi(X^K_{\tau_n-\wedge T})-\xi(X^K_{\tau_{n-1}\wedge T})\right)\right]+\gamma T\\
	&=\ \E_b\left[\sum_{n:\tau_n\leq T}\gamma\left(\xi(X^K_{\tau_n})-\xi(X^K_{\tau_n-})\right)\right]+\gamma\E_b\xi(X_0^K) -\gamma\E_b\xi(X^K_{T})+\gamma T\\
	&=\ \E_b\left[\sum_{n:\tau_n\leq T}-\gamma\xi(X^K_{\tau_n-})\right]+\gamma\E_b\xi(X_0^K) -\gamma\E_b\xi(X^K_{T})+\gamma T,
	\end{align*}
	where we used $\xi(\yn)=0$ in the last step. Therefore, we obtain for all $\gamma\geq \max_{y\in[y_1,\bet]}\frac{g(y)}{\xi(y)}$ that
	\begin{align*}
	&\liminf_{T\to\infty}\frac{1}{T} \E_b\sum_{n:\tau_n\leq T}g(X_{\tau_n-}^K)\\
	&\hspace*{3em}=\ \liminf_{T\to\infty}\frac{1}{T} \Bigg(\E_b\bigg[\sum_{n:\tau_n\leq T}(g(X_{\tau_n-}^K)-\gamma\xi(X^K_{\tau_n-}))\bigg]\\
&\hspace*{15em}+\gamma\E_b\xi(X_0^K) -\gamma\E_b\xi(X^K_{T})+\gamma T\Bigg)\\
	&\hspace*{3em}\leq\ \liminf_{T\to\infty}\frac{1}{T} \left(0+\gamma\E_b\xi(X_0^K) -\gamma\E_b\xi(X^K_{T})\right)+\gamma\\
	&\hspace*{3em}=\ \liminf_{T\to\infty}\frac{1}{T} \left( -\gamma\E_b\xi(X^K_{T})\right)+\gamma\ \leq\ \gamma.
	\end{align*}
	For the last inequality, note that by Lemma \ref{lem:reflected} and the monotonicity of $\xi$
	\[\E_b\xi(X^K_{T})\ \geq\ \E_b\xi(X^{\yn}_{T})\]
	and as $b\in\Sigma$. Moreover, the auxiliary process $X^{\yn}$ is ergodic with a stationary density $\rho^0$ such that $\rho^0(x)$, up to a constant, coincides with $\rho(x)$ for $x\leq -1$ (\cite[Proposition 3.1]{helmes2017continuous}). As $\xi$ has at most linear growth on the negative part of the real axis, $\xi$ is $\rho^0$-integrable and we obtain
	\[\lim_{T\to\infty}\frac{1}{T} \E_b\xi(X^\yn_{T})\ =\ 0.\]
	This proves
	\begin{equation*}
	\Phi\ \leq \ \sup_{y\in[y_1,\bet]}\frac{g(y)}{\xi_b(y)}.
	\end{equation*}
Applying the same line of argument to $(\hat\tau_n)_{n\in\N}$, we see that the second inequality in the calculations above becomes equality. 
Noting that $X^{\hat K}$ is also ergodic, also the third inequality is indeed an equality. 
The same holds for the first one adapting the first steps in the proof. 
Putting pieces together, this proves that 
	\begin{align*}
	&\liminf_{T\to\infty}\frac{1}{T} \E_b\sum_{n:\hat\tau_n\leq T}g(X_{\hat\tau_n-}^{\hat K})\ =\ \Phi,
	\end{align*}
	i.e., $\hat K$ is optimal. 
\end{proof}

\begin{proof}[Proof of Lemma \ref{lem:estimate_xi}]
First note that by \eqref{eq:xi} the function\ $\xi_b$ is non-decreasing in $x$. 
Therefore,
	\[\inf_{x\in[y_1,\bet]}\xi_b(x)\ =\ \xi_b(y_1),\quad \sup_{x\in[y_1,\bet]}\xi_b(x)\ =\ \xi_b(\bet).\]
	Using the explicit expressions for the scale and speed function from \cite{borodin-salminen}, we furthermore obtain 
	\[\xi_b(x)\ =\ 2\int_{\yn}^{x}\int_{-\infty}^y\frac{1}{\sigma^2(z)}\exp\left(-2\int_z^y\frac{b(q)}{\sigma^2(q)}\d q\right)\d z\d y.\]
	Recall that $b(y)/\sigma^2(y)\geq\gamma$ for all $b\in\Sigma$ and $y<-A$. Furthermore, all $b\in \Sigma$ are uniformly bounded by a constant $\alpha$, i.e. $b(y)/\sigma^2(y)\geq\alpha$ for all $b\in\Sigma$ and $y\in[A,\bet]$. Therefore, it holds that
\begin{align*}
\xi_b(x)&\leq\ \xi_b(\bet)\\
&\leq\ \frac{2}{h}\int_{\yn}^{\bet}\left(\int_{-\infty}^A\e^{-2\gamma(z-y)}\d z+\int_{A}^y\e^{-2\alpha(z-y)}\d z\right) \d y\ =:\ M_2
\end{align*}
for all\ $b\in\Sigma$ and $x\in[y_1,\bet]$, proving the second bound. 
The first one can be found analogously.	
\end{proof} 

\begin{proof}[Proof of Lemma \ref{lem:csi}]
We only prove (ii). The analysis of the local time estimator and the proof of (i) are similar.
Introduce the function
\[g_y(u)\coloneqq \frac{1}{\rho_b(u)\sigma^2(u)}\int_{\R}Q\left(\frac{y-x}{h}\right)\rho_b(x)\left(\mathds{1}\{u>x\}-F_b(u)\right)\d x,\quad u\in\R,\]
$F_b$ denoting the distribution function associated with the invariant density $\rho_b$.
It\^o's formula applied to the function $\int_0^\bullet g_y(u)\d u$ and to the diffusion $X$ yields
\[
\int_{X_0}^{X_t}g_y(u)\d u=\int_0^tg_y(X_s)\d X_s+\frac{1}{2}\int_0^tg_y'(X_s)\d\langle X\rangle_s.
\]
Since $(\rho_b\sigma^2)'=2b\rho_b$ and 
\[g_y'(u)=-\frac{2b(u)}{\sigma^2(u)}g_y(u)+\frac{1}{\sigma^2(u)}\left\{Q\left(\frac{u-x}{h}\right)- \E_b\left[Q\left(\frac{y-X_0}{h}\right)\right]\right\},\ \ u\in\R,
\]
one obtains the representation
\begin{align*}
&\sqrt th\left(\rho_{t,Q}(h)(y)-\E_b\left[\rho_{t,Q}(h)(y)\right]\right)\\
&\hspace*{3em}=\frac{1}{\sqrt t}\int_0^tQ\left(\frac{y-X_s}{h}\right)\d s-\sqrt t\E_b\left[Q\left(\frac{y-X_0}{h}\right)\right]\\
&\hspace*{3em}=\ t^{-1/2}\rd_t^y+t^{-1/2}\Ma_t^y,
\end{align*}
for 
$\rd_t^y\coloneqq2\int_{X_0}^{X_t}g_y(u)\d u$ and $\Ma_t^y\coloneqq-2\int_0^tg_y(X_s)\sigma(X_s)\d W_s$.
The conditions on the class $\Sigma$ ensure that there exists a constant $M=M(\C,A,\gamma,\overline\nu,\underline\nu)$ such that, for any $b\in\Sigma(\C,A,\gamma,\sigma)$,
\[
\sup_{x\geq 0}\frac{1-F_b(x)}{\sigma^2(x)\rho_b(x)}\ \leq\ M\quad\text{ and } \quad\sup_{x\leq0}\frac{F_b(x)}{\sigma^2(x)\rho_b(x)}\ \leq\ M.
\] 
It further can be shown (see Proposition {\color{cs}7} in \cite{cacs18}) that there exists some constant $\Pi_0>0$ such that
\[
\E_b\left[\sup_{y\in\R}|\rd_t^y|\right]\ \le\ \Pi_1\sup_{y\in\mathbb Q}\lebesgue\left(\operatorname{supp}\left(Q\left(\frac{y-\cdot}{h}\right)\right)\right)\ \le\ \Pi_0h.
\]
For any $u\in\R$, it holds
\begin{align*}
\left|g_y(u)\right|^2
&=\ \left|\int_\R Q\left(\frac{y-x}{h}\right)\rho_b(x)\ \frac{\mathds{1}\{u>x\}-F_b(u)}{\rho_b(u)\sigma^2(u)}\d x\right|^2\\
&\leq\ \int_\R Q^2\left(\frac{y-x}{h}\right)\d x\int_\R \rho_b^2(x)\mathds{1}\{|y-x|\leq h\}\frac{(\mathds{1}\{u>x\}-F_b(u))^2}{\rho_b^2(u)\sigma^4(u)}\d x\\
&=\ \left\|Q\left(\frac{y-\cdot}{h}\right)\right\|_{L^2(\R)}^2\Bigg\{
\underbrace{\frac{F_b^2(u)}{\rho_b^2(u)\sigma^4(u)}\int_u^\infty\mathds{1}\left\{x\in B_y(h)\right\}\rho_b^2(x)\d x}_{=:\mathbf{I}(u)}\\
&\hspace*{10em}+\underbrace{\frac{(1-F_b(u))^2}{\rho_b^2(u)\sigma^4(u)}\int_{-\infty}^u\mathds{1}\left\{x\in B_y(h)\right\}\rho_b^2(x)\d x}_{=:\mathbf{II}(u)}\Bigg\}.
\end{align*}
Now, for $u>A$,
\begin{align*}
|\mathbf{I}(u)|&\leq\ \int_u^\infty\mathds{1}\left\{x\in B_y(h)\right\}\sigma^{-2}(u)\exp\left(4\int_u^x\frac{b(v)}{\sigma^2(v)}\d v\right)\d x\\
&\leq\ \int_u^\infty\mathds{1}\left\{x\in B_y(h)\right\}\sigma^{-2}(u)\exp\left(-4\gamma(x-u)\right)\d x\ \leq\ \underline\nu^{-2}\cdot 2h,\\
|\mathbf{II}(u)|&\leq\ M^2\cdot 2h\cdot\LL.
\end{align*}
The case $u<-A$ is treated analogously. 
Finally, for $u\in[-A,A]$, it holds
\begin{align*}
|g_y(u)|^2& \leq\ \left\|Q\left(\frac{y-\cdot}{h}\right)\right\|_{L^2(\R)}^2\sup_{-A\leq x\leq A}\frac{4\LL\cdot 2h}{\rho_b^2(x)}\\
&\leq\ 8\LL hC_{b,\sigma}^2\e^{2\C(2A+A^2)}\left\|Q\left(\frac{y-\cdot}{h}\right)\right\|_{L^2(\R)}^2.
\end{align*}
Consequently, applying the Burkholder--Davis--Gundy inequality, one obtains
\begin{align*}
\E_b\left[\left(\frac{1}{\sqrt t}|\Ma_t^y|\right)\right]&\le\ \Pi_1
 \E_b\left[\left(4t^{-1}\int_0^tg_y^2(X_s)\sigma^2(X_s)\d s\right)^{1/2}\right]\\
&\le\ \Pi_2\sqrt h\left\|Q\left(\frac{y-\cdot}{h}\right)\right\|_{L^2(\R)}\ \le\ \Pi_3h,
\end{align*}
for positive constants $\Pi_1,\Pi_2,\Pi_3$, {not depending on $y$.}
Thus,
\begin{align*}
\E_b\left[\int|\hat\rho_{t,h}(y)-\E_b\hat\rho_{t,h}(y)|\d y\right]&\le\
\frac{1}{th}\ \E_b\left[\int_{\com}\left(|\rd_t^y|+|\Ma_t^y|\right)\d y\right]\\
&\le\ \frac{\lebesgue(\com)}{th}\left(\Pi_0h+\Pi_3\sqrt t h\right)\ \le\ \frac{\nu}{\sqrt t}.
\end{align*}
For the bias, we obtain by standard arguments, exploiting H\"older continuity of $\rho_b$ and the order of the kernel $Q$,
\begin{align*}
\left|\E_b\hat \rho_{t,h}(y)-\rho_b(y)\right|&=\ \left|\frac{1}{h}\int Q\left(\frac{y-x}{h}\right)\left(\rho_b(x)-\rho_b(y)\right)\d x\right|\\
&\le\ \frac{h^\beta\mathcal L}{\lfloor\beta\rfloor!}\int|u^\beta Q(u)|\d u.
\end{align*}
The assertion now follows. 
\end{proof}

\end{appendix}

\bibliographystyle{abbrvnat}
\bibliography{impulse_nonparametric_aap_rev_arxiv}

\def\cprime{$'$} \def\cprime{$'$}
\begin{thebibliography}{27}
\providecommand{\natexlab}[1]{#1}
\providecommand{\url}[1]{\texttt{#1}}
\expandafter\ifx\csname urlstyle\endcsname\relax
  \providecommand{\doi}[1]{doi: #1}\else
  \providecommand{\doi}{doi: \begingroup \urlstyle{rm}\Url}\fi

\bibitem[Aeckerle-Willems and Strauch(2021)]{cacs18}
C.~Aeckerle-Willems and C.~Strauch.
\newblock Concentration of scalar ergodic diffusions and some statistical
  implications.
\newblock \emph{Annales de l'Institut Henri Poincar{\'e}, Probabilit{\'e}s et
  Statistiques}, 57\penalty0 (4):\penalty0 1857--1887, 2021.

\bibitem[Alvarez(2004{\natexlab{a}})]{alvarez2004class}
L.~H. Alvarez.
\newblock A class of solvable impulse control problems.
\newblock \emph{Applied Mathematics and Optimization}, 49\penalty0
  (3):\penalty0 265--295, 2004{\natexlab{a}}.

\bibitem[Alvarez(2004{\natexlab{b}})]{A04}
L.~H.~R. Alvarez.
\newblock Stochastic forest stand value and optimal timber harvesting.
\newblock \emph{SIAM J. Control Optim.}, 42\penalty0 (6):\penalty0 1972--1993
  (electronic), 2004{\natexlab{b}}.

\bibitem[Auer et~al.(2007)Auer, Ortner, and Szepesv{\'a}ri]{auer2007improved}
P.~Auer, R.~Ortner, and C.~Szepesv{\'a}ri.
\newblock Improved rates for the stochastic continuum-armed bandit problem.
\newblock In \emph{International Conference on Computational Learning Theory},
  pages 454--468. Springer, 2007.

\bibitem[Bayraktar and Zhang(2015)]{bayraktar2015minimizing}
E.~Bayraktar and Y.~Zhang.
\newblock Minimizing the probability of lifetime ruin under ambiguity aversion.
\newblock \emph{SIAM Journal on Control and Optimization}, 53\penalty0
  (1):\penalty0 58--90, 2015.

\bibitem[Bertsekas(2012)]{MR3642732}
D.~P. Bertsekas.
\newblock \emph{Dynamic programming and optimal control. {V}ol. {II}.
  {A}pproximate dynamic programming}.
\newblock Athena Scientific, Belmont, MA, fourth edition, 2012.

\bibitem[Bertsekas(2019)]{bertsekas_reinf}
D.~P. Bertsekas.
\newblock \emph{Reinforcement {L}earning and {O}ptimal {C}ontrol}.
\newblock Athena Scientific, Belmont, MA, 2019.

\bibitem[Bielecki et~al.(2019)Bielecki, Chen, Cialenco, Cousin, and
  Jeanblanc]{bielecki2019adaptive}
T.~R. Bielecki, T.~Chen, I.~Cialenco, A.~Cousin, and M.~Jeanblanc.
\newblock Adaptive robust control under model uncertainty.
\newblock \emph{SIAM Journal on Control and Optimization}, 57\penalty0
  (2):\penalty0 925--946, 2019.

\bibitem[Borodin and Salminen(2015)]{borodin-salminen}
A.~Borodin and P.~Salminen.
\newblock \emph{Handbook of {B}rownian Motion -- Facts and Formulae, 2nd
  edition, corrected printing}.
\newblock Birkh\"auser, Basel, Boston, Berlin, 2015.

\bibitem[Bubeck and Cesa-Bianchi(2012)]{bubeck2012regret}
S.~Bubeck and N.~Cesa-Bianchi.
\newblock Regret analysis of stochastic and nonstochastic multi-armed bandit
  problems.
\newblock \emph{Foundations and Trends{\textregistered} in Machine Learning},
  5\penalty0 (1):\penalty0 1--122, 2012.

\bibitem[Bubeck et~al.(2011)Bubeck, Munos, Stoltz, and
  Szepesv{\'a}ri]{bubeck2011x}
S.~Bubeck, R.~Munos, G.~Stoltz, and C.~Szepesv{\'a}ri.
\newblock X-armed bandits.
\newblock \emph{Journal of Machine Learning Research}, 12\penalty0
  (May):\penalty0 1655--1695, 2011.

\bibitem[Christensen(2013)]{christensen2013optimal}
S.~Christensen.
\newblock Optimal decision under ambiguity for diffusion processes.
\newblock \emph{Mathematical Methods of Operations Research}, 77\penalty0
  (2):\penalty0 207--226, 2013.

\bibitem[Christensen et~al.(2021{\natexlab{a}})Christensen, Neumann, and
  Sohr]{christensen2021competition}
S.~Christensen, B.~A. Neumann, and T.~Sohr.
\newblock Competition versus cooperation: A class of solvable mean field
  impulse control problems.
\newblock \emph{SIAM Journal on Control and Optimization}, 59\penalty0
  (5):\penalty0 3946--3972, 2021{\natexlab{a}}.

\bibitem[Christensen et~al.(2021{\natexlab{b}})Christensen, Strauch, and
  Trottner]{christensen2021learning}
S.~Christensen, C.~Strauch, and L.~Trottner.
\newblock Learning to reflect: A unifying approach for data-driven stochastic
  control strategies.
\newblock \emph{arXiv preprint arXiv:2104.11496}, 2021{\natexlab{b}}.

\bibitem[Gobet et~al.(2004)Gobet, Hoffmann, and Rei{\ss}]{goetal04}
E.~Gobet, M.~Hoffmann, and M.~Rei{\ss}.
\newblock Nonparametric estimation of scalar diffusions based on low frequency
  data.
\newblock \emph{Ann. Statist.}, 32\penalty0 (5):\penalty0 2223--2253, 2004.
\newblock ISSN 0090-5364.

\bibitem[Helmes et~al.(2017)Helmes, Stockbridge, and Zhu]{helmes2017continuous}
K.~L. Helmes, R.~H. Stockbridge, and C.~Zhu.
\newblock Continuous inventory models of diffusion type: long-term average cost
  criterion.
\newblock \emph{The Annals of Applied Probability}, 27\penalty0 (3):\penalty0
  1831--1885, 2017.

\bibitem[Helmes et~al.(2018)Helmes, Stockbridge, and Zhu]{helmes2018weak}
K.~L. Helmes, R.~H. Stockbridge, and C.~Zhu.
\newblock A weak convergence approach to inventory control using a long-term
  average criterion.
\newblock \emph{Advances in Applied Probability}, 50\penalty0 (4):\penalty0
  1032--1074, 2018.

\bibitem[Jobj{\"o}rnsson and Christensen(2018)]{jobjornsson2018anscombe}
S.~Jobj{\"o}rnsson and S.~Christensen.
\newblock Anscombe's model for sequential clinical trials revisited.
\newblock \emph{Sequential Analysis}, 37\penalty0 (1):\penalty0 115--144, 2018.

\bibitem[Kohler and Walk(2013)]{kohler2013data}
M.~Kohler and H.~Walk.
\newblock On data-based optimal stopping under stationarity and ergodicity.
\newblock \emph{Bernoulli}, 19\penalty0 (3):\penalty0 931--953, 2013.

\bibitem[Kutoyants(2004)]{kut04}
Y.~A. Kutoyants.
\newblock \emph{Statistical Inference for Ergodic Diffusion Processes}.
\newblock Springer Series in Statistics. Springer, New York, 2004.

\bibitem[Lattimore and Szepesv{\'a}ri(2020)]{Lattimore_bandits}
T.~Lattimore and C.~Szepesv{\'a}ri.
\newblock \emph{Bandit algorithms}.
\newblock Cambridge University Press, 2020.

\bibitem[Le~Gall(2016)]{legall16}
J.-F. Le~Gall.
\newblock \emph{Brownian motion, martingales, and stochastic calculus}, volume
  274 of \emph{Graduate Texts in Mathematics}.
\newblock Springer, 2016.
\newblock ISBN 978-3-319-31088-6; 978-3-319-31089-3.
\newblock \doi{10.1007/978-3-319-31089-3}.
\newblock URL \url{https://doi.org/10.1007/978-3-319-31089-3}.

\bibitem[Locatelli and Carpentier(2018)]{locatelli2018adaptivity}
A.~Locatelli and A.~Carpentier.
\newblock Adaptivity to smoothness in x-armed bandits.
\newblock In \emph{Conference on Learning Theory}, pages 1463--1492, 2018.

\bibitem[Pe{\v{s}}kir and Shiryaev(2006)]{ps}
G.~Pe{\v{s}}kir and A.~N. Shiryaev.
\newblock \emph{Optimal stopping and free-boundary problems}.
\newblock Lectures in Mathematics ETH Z\"urich. Birkh\"auser Verlag, Basel,
  2006.

\bibitem[Riedel(2009)]{R}
F.~Riedel.
\newblock Optimal stopping with multiple priors.
\newblock \emph{Econometrica}, 77\penalty0 (3):\penalty0 857--908, 2009.

\bibitem[Rieder and B{\"a}uerle(2005)]{rieder2005portfolio}
U.~Rieder and N.~B{\"a}uerle.
\newblock Portfolio optimization with unobservable markov-modulated drift
  process.
\newblock \emph{Journal of Applied Probability}, 42\penalty0 (2):\penalty0
  362--378, 2005.

\bibitem[Stettner(1986)]{stettner1986continuous}
{\L}.~Stettner.
\newblock On continuous time adaptive impulsive control.
\newblock In \emph{System Modelling and Optimization: Proceedings of 12th IFIP
  Conference, Budapest, Hungary, September 2--6, 1985}, pages 913--922.
  Springer, 1986.

\end{thebibliography}

\end{document}